\newtheorem{definition}{Definition}
\newtheorem{theorem}{Theorem}
\newtheorem{conjecture}[theorem]{Conjecture}
\newtheorem{lemma}[theorem]{Lemma}
\newtheorem{case}{Case}
\newtheorem{subcase}{Case}
\numberwithin{subcase}{case}
\title{A new lower bound for the Towers of Hanoi problem}
\author{Codru\unichar{355} Grosu\thanks{This research was supported by the Deutsche Forschungsgemeinschaft within the research training group `Methods for Discrete Structures' (GRK 1408).}}
\affil{\small{grosu.codrut@gmail.com, Freie Universit\"at Berlin, Germany}}
\date{}
\begin{document}

\maketitle

\abstract{More than a century after its proposal, the Towers of Hanoi puzzle with $4$ pegs was solved by Thierry Bousch in a breakthrough paper in $2014$. The general problem with $p$ pegs is still open, with the best lower bound on the minimum number of moves due to Chen and Shen. We use some of Bousch's new ideas to obtain an asymptotic improvement on this bound for all $p \geq 5$.}

\section{\normalsize Introduction}

\textit{The Towers of Hanoi} is a puzzle invented by the French mathematician \'Edouard Lucas in $1883$ (\cite{Lucas1893}). The setup consists of $3$ pegs and $N$ disks of different sizes, arranged on the first peg in increasing order according to size. The goal is to move the disks from the first peg to another in as few moves as possible, such that the following three rules are always obeyed:
\begin{itemize}
\item[(R1)] only one disk can be moved at a time;
\item[(R2)] each move consists of taking the topmost disk on a peg and placing it on another peg;
\item[(R3)] a smaller disk is always moved on top of a larger one, or on an empty peg.
\end{itemize}
It is easy to see that the solution requires $2^N-1$ moves. The puzzle is very popular, and is frequently used to teach recursive algorithms to first-year computer science students.

Several variations of the original problem have been proposed (\cite{Stockmeyer94}), with one possibility being to increase the number of pegs available in the game. The puzzle with $4$ pegs was first introduced by Dudeney in $1908$ in his book \textit{The Canterbury Puzzles}, under the name "Reve's Puzzle". In $1939$, the general problem with $p$ pegs and $N$ disks was proposed in the \textit{American Mathematical Monthly} in the \textit{Advanced Problems} section, as Problem $3918$ (\cite{Stewart39}). Two years later, the journal published the proposer's (B.M. Stewart) claimed solution \cite{Stewart41}, as well as one solution submitted by a reader (J. S. Frame) \cite{Frame41}. The two solutions presented essentially equivalent formulas for the minimum number of moves needed, as well as an algorithm achieving the given bound. However, as noted by the Editors of the \textit{Monthly}, the two proofs rested on an unproven assumption about the optimality of the algorithm.

In fact, proving that the Frame-Stewart algorithm is best possible has since become a notorious open problem (\cite{Lunnon86}). However, in $2014$, more than a century after Dudeney's book appeared, the case $p=4$ was finally solved by Bousch (\cite{Bousch14}) in a very elegant way. We will say more about his beautiful solution later, but first let us describe the Frame-Stewart algorithm.

Given $N$ disks and $p$ pegs, the algorithm chooses an integer $1 \leq \ell < N$ that minimizes the number of steps in the following formula:
\begin{itemize}
\item Move the top $\ell$ disks from the start peg to an intermediate peg, using $p$ pegs.
\item Move the bottom $N-\ell$ disks from the start peg to the goal peg, using $p-1$ pegs (one peg is blocked by the $\ell$ smaller disks sitting on it).
\item Move the initial $\ell$ disks from the intermediate peg to the goal peg, using $p$ pegs.
\end{itemize}

%The Frame-Stewart algorithm was rediscovered several times, and sometimes optimality was also claimed (see ). However, no proof in general stood up to scrutiny.

Let $\Phi(p, N)$ denote the number of steps taken by the Frame-Stewart algorithm for $N$ disks and $p$ pegs. Then we have the recursive formula
\begin{equation}
\label{eq:FrameStewart}
\Phi(p, N) = \min_{1 \leq \ell < N} \left\{2\Phi(p, \ell) + \Phi(p-1, N-\ell)\right\},
\end{equation}
with initial data $\Phi(3, N) = 2^N-1$ and $\Phi(p, 1) = 1$.

Let $H(p, N)$ denote the minimum number of steps needed to move $N$ disks frome one peg to another, using $p$ pegs, according to the rules (R1)-(R3). We already know that $H(3, N) = \Phi(3, N)$. Building upon a result of Szegedy (\cite{Szegedy99}), Chen and Shen showed the following.
\begin{theorem}[Chen-Shen, \cite{Chen2004}]
\label{thm:chen}
For all $p \geq 3$ and $N \geq 1$ we have $H(p, N) \geq 2^{m - 1}$, where $m \geq 0$ is the largest integer such that $\binom{m+p-3}{p-2} < N$.
\end{theorem}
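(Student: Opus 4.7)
I would induct on $m$. The base case $m = 0$ is immediate since $2^{m-1} = 1/2$ and any instance with $N \geq 1$ requires at least one move. For the inductive step, fix an optimal schedule $\sigma$ of length $H(p,N)$ that moves the $N$ disks from peg $A$ to peg $B$, and focus on the largest disk $d_N$.

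The key structural observation (in the style of Szegedy) is that at the instant $t^\star$ just before the \emph{final} move of $d_N$ into its destination peg $B$, the peg $B$ must be empty (since $d_N$ is the largest disk) and $d_N$ itself sits on some other peg $C$, so all $N - 1$ smaller disks are distributed over the remaining $p - 2$ pegs. A symmetric observation applies at the instant of $d_N$'s first move. I would split $\sigma$ into a pre-phase, middle, and post-phase at these two critical instants. The post-phase must consolidate $N - 1$ smaller disks from $p - 2$ pegs onto $B$, which is essentially a Towers-of-Hanoi instance on $p - 1$ pegs (peg $C$ is freed once $d_N$ has moved, while $d_N$'s persistent presence on $B$ costs us one peg of effective freedom for the subsequent re-stacking). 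Applying the induction hypothesis to this post-phase and, symmetrically, to the pre-phase should yield two contributions of at least $2^{m-2}$ moves each, summing to $2^{m-1}$.

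The bookkeeping that closes the induction is controlled by Pascal's rule
\[
\binom{m + p - 3}{p - 2} = \binom{m + p - 4}{p - 2} + \binom{m + p - 4}{p - 3},
\]
in which the two summands are precisely the thresholds for parameters $(p, m-1)$ and $(p-1, m)$ respectively. Consequently, whenever we split the $N$ disks between two sub-instances of $\sigma$, the hypothesis $\binom{m + p - 3}{p - 2} < N$ guarantees that at least one side still exceeds its corresponding reduced threshold, so the inductive hypothesis is applicable on that side.

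The main obstacle, and the true substance of the argument, is certifying that the pre- and post-phases contribute \emph{independently}: because $d_N$'s trajectory links them through the shared intermediate configuration at $t^\star$, one must rule out the scenario in which one phase cheaply exploits work done by the other. I would handle this by a careful peg-availability accounting showing that throughout each phase $d_N$ pins down one peg and thereby genuinely reduces the effective $p$ by one, so that neither phase can circumvent the $(p-1)$-peg lower bound. Making this rigorous — rather than any single binomial identity — is where the delicate case analysis lives.
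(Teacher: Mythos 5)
First, note that the paper does not prove Theorem~\ref{thm:chen}: it is imported from \cite{Chen2004}, and the only related machinery the paper exhibits is the route Chen and Shen actually take, namely replacing $H$ by the essential-path quantity $\Gamma$ and using the recursion of Lemma~\ref{lem:recursive}. Measured against that, your sketch has a genuine gap at its central step. You claim that after the last move of the largest disk onto the destination peg $B$, the remaining task of gathering the $N-1$ smaller disks onto $B$ is ``essentially a Towers-of-Hanoi instance on $p-1$ pegs'' because the largest disk ``pins down'' a peg. This is false: the largest disk sits at the bottom of $B$ and every smaller disk may legally be placed on top of it, so all $p$ pegs remain fully available to the smaller disks throughout the post-phase. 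No amount of ``peg-availability accounting'' can rescue this, because the premise (that the largest disk costs a peg of effective freedom) is simply not true; the same objection applies to the pre-phase. This is exactly the trap that makes the lower bound hard. In the correct argument the $(p-1)$-peg term is obtained for a proper \emph{subset} of the disks over a suitable time window, not for all $N-1$ smaller disks at once.

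Second, your decomposition does not match your bookkeeping. Pascal's rule $\binom{m+p-3}{p-2}=\binom{m+p-4}{p-2}+\binom{m+p-4}{p-3}$ is useful precisely when the disks are \emph{partitioned} into two groups, one fed to a $(p,m-1)$-type instance and one to a $(p-1,m)$-type instance, so that at least one group must exceed its reduced threshold. In your pre-phase/post-phase split, both phases involve \emph{all} $N-1$ smaller disks, so there is no partition to which the identity applies. Moreover, the two phases begin or end in scattered configurations (the smaller disks spread over $p-2$ pegs), which are not peg-to-peg Hanoi instances, so the inductive hypothesis on $H$ cannot be invoked for them. This is precisely why Szegedy introduced $\Gamma$ --- the shortest path, from an arbitrary starting configuration, that moves every disk at least once --- for which the analogous decomposition does close. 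A workable plan for Theorem~\ref{thm:chen} is: observe $H(p,N)\ge\Gamma(p,N)$, prove the recursion of Lemma~\ref{lem:recursive}, and then run the induction with the disk set split by size, exactly as the paper does in its proof of Theorem~\ref{thm:main2}.
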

It is well-known that $\Phi(p, N) = \Theta(\frac{1}{(p-3)!}m^{p-3}2^m)$, and so by the above theorem $H(p, N)$ has the same growth rate as $\Phi(p, N)$. Theorem \ref{thm:chen} gives the best known lower bound on $H(p, N)$. Apart from this, Bousch has proved the following:
\begin{theorem}[Bousch, \cite{Bousch14}]
\label{thm:bousch}
For all $N \geq 1$ we have $H(4, N) = \Phi(4, N)$.
\end{theorem}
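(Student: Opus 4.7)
The upper bound $H(4,N) \leq \Phi(4,N)$ is immediate from executing the Frame-Stewart algorithm itself on four pegs, so the whole task is to establish the matching lower bound $H(4,N) \geq \Phi(4,N)$. My plan is a potential-function argument of the style pioneered by Szegedy and by Chen--Shen, but refined far enough to capture the exact Frame--Stewart constant and not just the asymptotic exponent.

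Concretely, for each configuration $C$ of $N$ disks on the four pegs and each designated target peg $T$, I would try to construct a non-negative integer potential $\Psi_T(C)$ satisfying: (i) any single legal move changes $\Psi_T$ by at most $1$; (ii) $\Psi_T(C_T) = 0$ while $\Psi_T(C_S) \geq \Phi(4,N)$, where $C_T$ and $C_S$ are the configurations with all disks stacked on $T$ and on some other source peg $S$. Summing (i) along any legal sequence of moves from $C_S$ to $C_T$ and combining with (ii) forces the sequence to have length at least $\Phi(4,N)$, which is the desired lower bound.

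The natural recursive definition of $\Psi_T$ mirrors the Frame--Stewart split \eqref{eq:FrameStewart}: looking at the largest disk $d_N$ in $C$, the potential should pay for (a) clearing some peg other than $T$ of smaller disks so that $d_N$ can travel there, (b) transporting $d_N$ to $T$, and (c) reassembling the smaller disks on top of $d_N$. This suggests trying something like $\Psi_T(C) = \min_{\ell} \{2 \Phi(4, \ell) + \Phi(3, N-\ell)\}$ when $d_N$ is at $S$, plus carefully engineered correction terms recording the current distribution of the smaller disks and the progress already made toward an eventual move of $d_N$.

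The main obstacle, and the reason this case resisted solution for over a century, is verifying property (i) for such a recursive $\Psi_T$: an adversarial move of a small disk can drop one of the recursively defined subcosts by much more than one, so only a very carefully chosen correction term — accounting for the already-paid progress toward moving $d_N$ — can keep the overall Lipschitz constant equal to $1$. Discovering the right correction is Bousch's key innovation (his so-called \emph{biased} potential). I would expect the bulk of the work to consist of a delicate case analysis over the types of move (moves of $d_N$ itself, moves that change which peg is effectively blocked for $d_N$, moves purely among smaller disks, etc.), verifying in each case that the refined $\Psi_T$ drops by at most one; the inductive framework around this verification should then close the argument.
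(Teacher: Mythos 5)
Your proposal correctly identifies the general shape of the argument---a quantity that decreases by at most one per legal move and separates source from target by $\Phi(4,N)$---and this is indeed the skeleton of Bousch's proof, which the present paper does not reprove but imports through Theorem~\ref{thm:mainBousch} and Lemma~\ref{lem:obs3}. However, as written your text is a research plan rather than a proof: the object that carries the entire mathematical content is never defined. You say you ``would try to construct'' the potential and that the correction terms must be ``carefully engineered,'' thereby deferring precisely the step that you yourself identify as the reason the problem stood open. The one candidate formula you do write down, $\min_{\ell}\{2\Phi(4,\ell)+\Phi(3,N-\ell)\}$, is a constant equal to $\Phi(4,N)$, independent of where the smaller disks actually sit, so it cannot satisfy your Lipschitz property (i) by itself; everything rests on the unspecified corrections. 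A verification of (i) by ``case analysis over the types of move'' cannot even begin until the function is written down, so there is nothing here that could be checked or completed without supplying the key idea from scratch.

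For comparison, the actual construction is the functional of \eqref{eq:psiLDef}--\eqref{eq:psiDef}, namely $\Psi(E)=\sup_{L}\bigl((1-L)2^{L}-1+\sum_{n\in E}2^{\min\{\nabla_4 n,\,L\}}\bigr)$, which is a function of the \emph{set of disks occupying a single peg}, not of the whole configuration, and the statement proved about it (Theorem~\ref{thm:mainBousch}) bounds from below the distance from an arbitrary configuration to any configuration having two prescribed empty pegs. Even granting that theorem, deducing $H(4,N)\geq\Phi(4,N)$ requires one further step your sketch does not anticipate: since $\Psi([N-1])=\frac{\Phi(4,N)-1}{2}$ by Lemma~\ref{lem:obs3} is only about half of the target, one must split the path at the first and last moves of the largest disk, apply the bound to each half, and add the move of the largest disk itself. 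So both the central definition and the endgame are missing; what remains is a correct but non-substantive identification of the strategy.
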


The main result of this note is the following asymptotic improvement of Theorem \ref{thm:chen}.
\begin{theorem}
\label{thm:main2}
Let $p \geq 4$ and $N \geq 1$. Write $N-1 = \binom{m+p-3}{p-2} + \binom{t+p-4}{p-3} + r$, with $m \geq t$ and $0 \leq r < \binom{t+p-4}{p-4}$ (this decomposition exists and is unique). Then we have $H(p, N) \geq (m+t)2^{m-2(p-2)}$.
\end{theorem}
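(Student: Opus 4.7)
\medskip\noindent\textbf{Proof plan.} The plan is to prove the bound by induction on $p$, starting from Theorem \ref{thm:bousch} as the base case $p=4$: there $H(4,N)=\Phi(4,N) = \Theta(m\cdot 2^m)$, which dominates $(m+t)2^{m-4}$ once the constants are checked. For the inductive step, fix an arbitrary legal algorithm $A$ solving the puzzle with $p$ pegs and $N$ disks, let $d_N$ denote the largest disk, and decompose the execution of $A$ into \emph{phases} separated by the moves of $d_N$. During any single phase the remaining $N-1$ disks are rearranged within only $p-1$ pegs (the peg carrying $d_N$ is effectively blocked for disks that need to sit below it), which is exactly the setting where I can invoke the inductive hypothesis on $(p-1, N-1)$.

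The Chen-Shen argument applies the inductive hypothesis only to the single phase containing the first move of $d_N$, producing the bound $2^{m-1}$. To extract the extra factor $(m+t)$, I would instead look simultaneously at many phases. The key idea, inspired by Bousch, is to introduce a weight/potential function $\phi$ on the configurations of the top $N-1$ disks that tracks the positions of a distinguished \emph{secondary block} $B$ of size $\binom{t+p-4}{p-3}$ (the middle term of the given decomposition). The plan is then to show two things: (i) the trajectory of $\phi$ from initial to final configuration must pass through $\Omega(m+t)$ distinct plateaus, each separated by at least one move of $d_N$; and (ii) each transition between consecutive plateaus costs, by the inductive hypothesis on $p-1$ pegs, at least $2^{m-2(p-2)}$ moves. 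Summing yields the claimed $(m+t)\,2^{m-2(p-2)}$ lower bound.

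The main obstacle I expect is step (i): showing that $B$ genuinely must be repositioned this many times rather than sitting still while $d_N$ shuffles around. This is the place where a Bousch-style weight argument on the joint configuration of $d_N$ together with $B$ looks essential, because a naive Chen-Shen cut throws away all information about the interaction between the two levels. Concretely, I would assign monotone weights to peg labels (depending on whether $d_N$ occupies them), propagate these weights to $B$, and check that \emph{every} move of $d_N$ either forces $B$ to rearrange substantially in the preceding phase or contributes to a $(m+t)$-step progression of $\phi$ toward its target value.

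Finally, a routine bookkeeping step converts the counts into the stated inequality; the loss of $2(p-2)$ in the exponent comes from two factor-$2$ penalties per inductive step, one for the reduction from $p$ to $p-1$ pegs (in the spirit of Chen-Shen) and one for the identification and repositioning of the secondary block $B$ inside the top $N-1$ disks. I would set up the inductive statement so that these constants propagate cleanly; the delicate point is to ensure that the phases used in the induction on $p-1$ are disjoint from those used to produce the $(m+t)$ progression at level $p$, so that there is no double counting between the exponential and polynomial contributions.
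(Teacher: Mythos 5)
Your plan has two genuine gaps, one structural and one at its core. Structurally, you propose to decompose an optimal algorithm for $H(p,N)$ into phases between moves of the largest disk and to ``invoke the inductive hypothesis on $(p-1,N-1)$'' inside a phase. But the inductive hypothesis is a lower bound on $H$, i.e.\ on the cost of \emph{transferring} a tower from one peg to another, and a phase is not a transfer task: the $N-1$ smaller disks merely get rearranged (and, contrary to your claim, they are not confined to $p-1$ pegs, since smaller disks may legally rest on top of the largest disk). This is precisely why Szegedy introduced the quantity $\Gamma(p,N)$, the length of the shortest path moving every disk at least once, and why the paper proves the stronger statement $\Gamma(p,N)\geq (m+t)2^{m-2(p-2)}$ and runs the induction through the Chen--Shen recursion $\Gamma(p,N)\geq 2\min\{\Gamma(p,N-\ell),\Gamma(p-1,\ell)\}$ (Lemma~\ref{lem:recursive}). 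Relatedly, your base case $H(4,N)=\Phi(4,N)$ (Theorem~\ref{thm:bousch}) is not the right base case for such an induction: what is needed is a lower bound on $\Gamma(4,N)$, which is roughly $\Phi(4,N)/4$ and does not follow from Bousch's theorem; establishing it is Theorem~\ref{thm:main1}, the bulk of the paper.

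The deeper gap is step (i) of your plan, which you yourself flag as the main obstacle: you never show that the secondary block $B$ must be repositioned $\Omega(m+t)$ times, and the proposed potential function $\phi$ is not defined concretely enough to check anything. The paper does not obtain the factor $m+t$ by a plateau argument repeated at each level $p$. Instead, that factor is produced exactly once, at $p=4$, where the full strength of Bousch's $\Psi$-machinery is used to determine $\Gamma(4,N)=3+\frac{\Phi(4,N)-5}{4}=2+(m+t)2^{m-2}$ exactly; the passage from $p-1$ to $p$ is then elementary bookkeeping with the decomposition $N-1=\Delta_p m+\Delta_{p-1}t+r$, split into the cases $t+1\leq m-1$ and $t\geq m-1$, each time choosing the cut $\ell$ in the Chen--Shen recursion so that both branches of the minimum already carry the factor $m+t$ (at the price of one factor of $4$ in the exponent per level, which is where $2^{-2(p-2)}$ comes from). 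So the interaction between the largest disk and a distinguished block, which your plan treats as the engine of the whole argument at every level, is in fact only needed at the bottom level, and there it takes the form of a delicate case analysis of essential paths in $\mathcal{H}(4,N)$ rather than a weight propagation scheme. As written, your proposal does not constitute a proof.
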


The proof relies on the following idea, introduced by Szegedy. Rather than finding a lower bound for $H(p, N)$, one can try to bound the length $\Gamma(p, N)$ of the shortest sequence of steps that moves every disk at least once (here we also minimize over all possible starting configurations). Clearly $\Gamma(p, N)$ is then a lower bound for $H(p, N)$, as every disk must move at least once from the initial peg to the destination peg in the Hanoi problem. Szegedy has shown the following.
\begin{theorem}[Szegedy, \cite{Szegedy99}]
\label{thm:szegedy}
If $N \leq 1$ then $\Gamma(3, N) = N$. Otherwise $\Gamma(3, N) = 1 + 2^{N-2}$.
\end{theorem}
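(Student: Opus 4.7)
The statement decomposes into the trivial case $N \leq 1$ (for which zero or one moves suffice and no further argument is needed) and the substantive case $N \geq 2$, where I need matching upper and lower bounds of $1 + 2^{N-2}$.

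For the upper bound I would exhibit the following witness. Start with disks $1, \ldots, N-1$ stacked on peg $B$, disk $N$ alone on peg $A$, and peg $C$ empty. First move disk $N$ from $A$ to $C$ (one move, since peg $C$ is empty). Next apply the classical $3$-peg Tower of Hanoi transfer to shift disks $1, \ldots, N-2$ from on top of disk $N-1$ on peg $B$ to peg $A$; this costs $2^{N-2}-1$ moves and incidentally moves each of those $N-2$ disks at least once. Finally move disk $N-1$, now alone on peg $B$, onto peg $C$ on top of disk $N$. The total length is $1 + (2^{N-2}-1) + 1 = 1 + 2^{N-2}$, and every one of the $N$ disks has moved.

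For the lower bound, the heart of the argument is the following auxiliary lemma: any valid $3$-peg move sequence $T$ on $k$ disks such that (i) every disk moves at least once in $T$ and (ii) at some time during $T$ all $k$ disks rest on a single peg satisfies $|T| \geq 2^{k-1}$. Granting this lemma, I finish as follows. Let $S$ be any sequence realizing $\Gamma(3,N)$ and let $S'$ denote the subsequence of moves of disks $1, \ldots, N-1$ in $S$. Because disk $N$ always sits at the bottom of whatever peg it occupies, deleting its moves preserves validity; moreover each of the $N-1$ smaller disks moves in $S'$, and at the moment in $S$ immediately before disk $N$'s first move---which corresponds to a well-defined point in $S'$---the $N-1$ smaller disks must all be piled on a single peg (otherwise disk $N$ could not move). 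Hypotheses (i) and (ii) of the lemma thus hold for $S'$ with $k = N-1$, so $|S'| \geq 2^{N-2}$, and adding disk $N$'s at-least-one move gives $|S| \geq 1 + 2^{N-2}$.

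To prove the lemma I would pick a time $\tau$ during $T$ at which all $k$ disks are stacked on some peg $P$, and split $T = T_1 T_2$ at $\tau$, letting $D_i \subseteq \{1,\ldots,k\}$ be the set of disks moved by $T_i$; hypothesis (i) forces $D_1 \cup D_2 = \{1,\ldots,k\}$. For $T_2$, which begins in the fully stacked configuration, if $j := \max D_2 \geq 1$ then disk $j$ must eventually move, and the standard $3$-peg Hanoi lower bound says that transferring the $j-1$ disks initially above disk $j$ onto a single other peg costs at least $2^{j-1}-1$ moves, plus one more for disk $j$ itself; hence $|T_2| \geq 2^{\max D_2 - 1}$, with the convention that this is $0$ when $D_2 = \emptyset$. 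Reversing $T_1$ yields a valid sequence starting from the stacked configuration and moving exactly $D_1$, so the same argument gives $|T_1| \geq 2^{\max D_1 - 1}$. Because $\max(D_1 \cup D_2) = k$, at least one of the two exponents equals $k-1$, so $|T| = |T_1| + |T_2| \geq 2^{k-1}$. The main care point is the Hanoi-style lower bound for freeing a buried disk from the top of a stacked tower, and checking that the reversal of a valid sequence is itself valid (which follows because each reversed elementary move still respects (R1)--(R3)); both are routine.
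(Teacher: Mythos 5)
This statement is imported by the paper from Szegedy's work without any proof, so there is no internal argument to compare yours against; judged on its own, your proof is correct and complete. The upper-bound witness is valid and has the right length, and the lower bound is sound: the observation that all $N-1$ smaller disks must sit on a single peg immediately before the largest disk's first move, the reduction to the subsequence $S'$ (legitimate because the largest disk never obstructs or enables a move of a smaller one), and the split-at-the-stacked-moment lemma all check out. The two points you flag as routine really are routine: the reversal of a legal move sequence is legal because after a move the transported disk is topmost on its target and the vacated peg's new top (if any) is larger; and the bound of $2^{j-1}-1$ for gathering disks $1,\ldots,j-1$ from a full stack onto a single \emph{unspecified} other peg follows from the classical Hanoi lower bound together with the symmetry between the two candidate target pegs. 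The degenerate cases ($D_1$ or $D_2$ empty, $N=2$) are handled by your stated conventions. I see no gap.
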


The main step in the proof of Theorem~\ref{thm:main2} is the following result, which may be of independent interest.
\begin{theorem}
\label{thm:main1}
For all $N \geq 0$ we have
\begin{equation*}
\Gamma(4, N) = \left\{
\begin{array}{ll}
N, & \textrm{if $N \leq 2$},\\
3+\frac{\Phi(4, N)-5}{4}, &\textrm{otherwise}.
\end{array}
\right.
\end{equation*}
\end{theorem}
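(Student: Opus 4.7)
The case $N\le 2$ is immediate. Assume $N\ge 3$; the proof splits into an explicit construction for the upper bound and a matching lower bound, the latter being the main challenge.

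For the upper bound, I plan to give an explicit construction. Let $\ell^*$ be the integer achieving the Frame-Stewart minimum, so $\Phi(4,N)=2\Phi(4,\ell^*)+2^{N-\ell^*}-1$. Arrange the initial configuration so that the bottom $N-\ell^*$ disks sit on three pegs $P_1,P_2,P_3$ in the Szegedy-optimal $\Gamma(3,N-\ell^*)$ arrangement, with peg $P_4$ reserved and empty for the bottom-disk moves; the top $\ell^*$ disks (which are smaller than any bottom disk) are then placed on top of this setup in a carefully chosen "partial Frame-Stewart" position, so that $\lceil \Phi(4,\ell^*)/2\rceil$ further moves will move each top disk at least once and concentrate them all onto $P_4$. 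First execute these $\lceil \Phi(4,\ell^*)/2\rceil$ top-disk moves, parking the top disks on $P_4$; then execute the Szegedy-optimal three-peg sequence on $P_1,P_2,P_3$ for the bottom disks, which moves each bottom disk at least once in $\Gamma(3,N-\ell^*)=1+2^{N-\ell^*-2}$ moves. The total is $\lceil \Phi(4,\ell^*)/2\rceil+\Gamma(3,N-\ell^*)$, and a direct substitution into the Frame-Stewart recurrence shows this equals $3+(\Phi(4,N)-5)/4$.

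For the lower bound, I would consider any valid sequence $\sigma$ of length $L$ that moves each of the $N$ disks at least once, and let $T$ be the first time disk $N$ moves. Just before move $T$, disk $N$ is alone on its source peg and its destination peg is empty, so the remaining $N-1$ disks occupy only two pegs. Split $\sigma$ into $\sigma_1$ (length $T-1$) and $\sigma_2$ (length $L-T$), and note that every disk smaller than $N$ must move somewhere in $\sigma_1\cup\sigma_2$. The plan is to adapt Bousch's weight-function technique from Theorem~\ref{thm:bousch} to analyze $\sigma_1$ and $\sigma_2$ separately, using the enforced two-peg configuration at the pivot as a gluing boundary condition, to conclude $|\sigma_1|+|\sigma_2|+1\ge 3+(\Phi(4,N)-5)/4$.

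The main obstacle is the lower bound. Bousch's original argument is technically delicate, and the $\Gamma$-setting introduces genuine new difficulties: both endpoints of $\sigma$ are free (removing the rigid boundary conditions of the Hanoi problem), and the two-peg state at the pivot is not a Hanoi endpoint, so the weight-function inequalities need to be re-derived rather than cited. Naive inductive approaches fail --- for instance applying the induction hypothesis to the sub-sequence of moves of disks $1,\ldots,N-1$ gives only $\Gamma(4,N)\ge\Gamma(4,N-1)+1$, which is far too weak once $\Phi(4,N)-\Phi(4,N-1)>4$. A tight bound will require redoing Bousch's inductive bookkeeping in a form that tracks the pivot state and balances the weight-function contributions of $\sigma_1$ and $\sigma_2$ against each other through it.
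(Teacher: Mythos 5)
Your upper bound is a different but workable bookkeeping of essentially the same construction, and the arithmetic checks out: with $\Phi(4,N)=2\Phi(4,\ell^*)+2^{N-\ell^*}-1$ one indeed gets $\lceil\Phi(4,\ell^*)/2\rceil+1+2^{N-\ell^*-2}=3+\frac{\Phi(4,N)-5}{4}$. The ``carefully chosen partial Frame--Stewart position'' does exist, but you must exhibit it: take the shortest essential path that starts from a full tower of $\ell^*$ disks (clear the top $\ell^*-1$ disks onto two pegs in $\frac{\Phi(4,\ell^*)-1}{2}$ moves, then move the bottom one) and reverse it; note that a midpoint configuration in the sense of Lemma~\ref{lem:obs3} would cost $\frac{\Phi(4,\ell^*+1)-1}{2}$ moves to gather, which is too many. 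The paper instead has the three largest disks each move exactly once, $b$ middle disks perform a three-peg transfer, and $a$ small disks gather from a midpoint configuration.

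The lower bound is where the proposal has a genuine gap, and the single split at the first move of the largest disk cannot be completed as described. The arithmetic is not the obstruction: if $A$ and $B$ are the contents of the two occupied pegs at the pivot, Lemma~\ref{lem:union} with $|A\cup B|=N-1$ would give $\Psi(A)+\Psi(B)\geq\frac{\Phi(4,N+2)-5}{4}\geq 2+\frac{\Phi(4,N)-5}{4}$, which suffices. The obstruction is that $|\sigma_1|\geq\Psi(A)$ and $|\sigma_2|\geq\Psi(B)$ are false in general: Theorem~\ref{thm:mainBousch} bounds $d(\mathbf{u},\mathbf{v})$ by $\Psi$ of a peg content of $\mathbf{u}$ only when the \emph{other} endpoint $\mathbf{v}$ has two empty pegs, whereas here both the two-empty-peg condition and the sets $A,B$ live at the pivot, and the free endpoints of an essential path carry no empty-peg condition whatsoever --- concretely, the starting configuration could already agree with the pivot configuration on all small disks, making $\sigma_1$ contribute nothing. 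The paper's fix is not a rebalancing across one pivot but the introduction of two further pivot disks: the nearest move of disk $N-2$ and a move of the largest disk $D$ on the fourth peg. These supply the three unit contributions behind the ``$3+$'' (with $|A\cup B|=N-3$ rather than $N-1$), and, more importantly, they manufacture empty-peg boundary conditions at \emph{interior} times where Theorem~\ref{thm:mainBousch} or the auxiliary Lemmas~\ref{lem:Two1}, \ref{lem:Two2}, \ref{lem:thmext1} and~\ref{lem:thmext2} apply, using essentiality in the form ``every disk sitting on the source peg of a forthcoming move must itself move in the intervening window.'' That machinery, together with the case analysis on the relative order of the three pivot times, is the actual content of the proof and is absent from your outline; as written, the lower bound remains unproved.
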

In fact we believe that the following holds.
\begin{conjecture}
\label{conj:all}
For all $p \geq 3$ and $N \geq 0$ we have
\begin{equation*}
\Gamma(p, N) = \left\{
\begin{array}{ll}
N, & \textrm{if $N \leq p-2$},\\
p-1+\frac{\Phi(p, N)-(2(p-2)+1)}{4}, &\textrm{otherwise}.
\end{array}
\right.
\end{equation*}
\end{conjecture}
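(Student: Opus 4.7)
I would approach the conjecture by induction on the number of pegs $p$, with the base cases $p=3$ (Theorem~\ref{thm:szegedy}) and $p=4$ (Theorem~\ref{thm:main1}) already in hand. The conjectured formula may be rewritten as $\Gamma(p, N) = (\Phi(p, N) + 2p-1)/4$ for $N > p-2$, so the proof should mirror the recursive structure of \eqref{eq:FrameStewart} while gaining an extra factor of four in the leading term.

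For the upper bound, let $\ell^{*}$ be the optimal split in \eqref{eq:FrameStewart}, so $\Phi(p, N) = 2\Phi(p, \ell^{*}) + \Phi(p-1, N - \ell^{*})$. I would construct a move-every-disk sequence starting from a carefully chosen configuration in which the $N - \ell^{*}$ largest disks are pre-arranged in an extremal $\Gamma(p-1, \cdot)$ configuration distributed over $p-1$ of the pegs, and the $\ell^{*}$ smallest disks occupy an extremal $\Gamma(p, \cdot)$ arrangement stacked on top of them. The sequence then splits into a ``large-disk'' phase (using $p-1$ pegs, by the induction hypothesis, since the remaining peg is blocked by the small disks) and a ``small-disk'' phase (using all $p$ pegs). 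Combining the inductive formula $\Gamma(p-1, N - \ell^{*}) = (\Phi(p-1, N - \ell^{*}) + 2p-3)/4$ with the Frame-Stewart identity above should make the two sub-costs telescope to the claimed value, provided the initial configuration is chosen so that the additive constants $p-2$ and $p-1$ do not double-count.

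The lower bound is the substantial part, and I would extend the potential-function technique of Szegedy and Bousch. The goal is to construct a potential $\Psi$ on configurations such that every legal move changes $\Psi$ by at most $4$; then $|\sigma| \geq (\Psi_{\mathrm{start}} - \Psi_{\mathrm{end}})/4$ for any move-every-disk sequence $\sigma$, and evaluating $\Psi$ on the worst pair of configurations should reproduce the conjectured bound. The potential should itself be recursive in $p$, assigning weight related to $\Phi(p-1, \cdot)$ to sub-configurations of successively smaller displaced disks, so as to mirror the nested structure of $\Phi(p, N)$. The main obstacle, in my view, is the design of $\Psi$ for $p \geq 5$: Bousch's construction exploits combinatorial features very specific to four pegs, and an obvious recursive definition tends to accumulate an $O(p^{2})$ additive slack rather than the sought $p-1$. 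Matching the additive constant precisely will likely require a genuinely new idea---perhaps an interleaving of sub-problem potentials, or a different linear functional extracting the Frame-Stewart-like contribution at each recursion level.
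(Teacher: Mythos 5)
This statement is labelled a \emph{conjecture} in the paper, and the paper does not prove it: it only establishes the cases $p=3$ (Theorem~\ref{thm:szegedy}) and $p=4$ (Theorem~\ref{thm:main1}), plus the matching upper bound $\Gamma(p,N)\leq p-1+\frac{\Phi(p,N)-(2(p-2)+1)}{4}$ via the remark after the proof of Theorem~\ref{thm:main1}. Your proposal is likewise not a proof. The upper-bound half of your plan is fine in spirit and close to what the paper sketches (move each of the $p-1$ largest disks exactly once, interleaved with a half-tower transfer of the small disks and a $3$-peg transfer of a middle block; your rewriting of the right-hand side as $(\Phi(p,N)+2p-1)/4$ is also correct). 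But the lower bound is the entire content of the conjecture, and there you offer only a desideratum --- ``construct a potential $\Psi$ such that every move changes it by at most $4$'' --- together with an honest admission that you do not know how to build it for $p\geq 5$. Naming the obstacle is not the same as overcoming it, so the gap is the whole theorem.

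One concrete point worth internalizing: the recursive tool that is actually available for $p\geq 5$, namely Lemma~\ref{lem:recursive} ($\Gamma(p,N)\geq 2\min\{\Gamma(p,N-\ell),\Gamma(p-1,\ell)\}$), provably cannot reach the conjectured value, because each application only doubles a minimum instead of adding the two sub-costs (plus the moves of the large disks). This is exactly why Theorem~\ref{thm:main2} lands at $(m+t)2^{m-2(p-2)}$, losing a multiplicative factor of roughly $4$ for every peg beyond the fourth; the loss is multiplicative in the main term, not the $O(p^2)$ additive slack you describe. So any successful attack must replace, not iterate, that lemma --- e.g.\ by finding a $p$-peg analogue of Bousch's $\Psi$ (equations \eqref{eq:psiLDef}--\eqref{eq:psiDef}) and of Theorem~\ref{thm:mainBousch} and Lemma~\ref{lem:union}, which is precisely the open problem. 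As it stands, your proposal should be presented as a research programme, not a proof.
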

Theorems \ref{thm:szegedy} and \ref{thm:main1} show that Conjecture \ref{conj:all} holds for $p \in \{3, 4\}$.

\section{\normalsize Definitions and auxiliary results}

For $n \in \mathbb N$ let $[n] = \{0, 1, \ldots, n-1\}$ denote the set of natural numbers smaller than $n$. Given $p$ pegs and $N$ disks, we always label the pegs using numbers from $[p]$, and similarly the disks using numbers from $[N]$.

We now give a more precise description of $\Phi$ as follows.

\begin{definition}[The operators $\Delta_p$ and $\nabla_p$]
Let $p \geq 3$. We define for all $n \geq 0$ the values
\begin{equation*}
\Delta_p(n) := \binom{n+p-3}{p-2}
\end{equation*}
and
\begin{equation*}
\nabla_p(n) := \max\left\{ k \geq 0 : \Delta_p(k) \leq n\right\}.
\end{equation*}
\end{definition}

Note that $\nabla_p$ is well-defined, since $\Delta_p(0) = 0$. Then it can be shown (\cite{Klavzar02}, \cite{Rand09}) that for all $p \geq 3$ and $N \geq 1$,
\begin{equation}
\label{eq:PhiDef}
\Phi(p, N) = 2^{\nabla_p 0} + 2^{\nabla_p 1} + \ldots + 2^{\nabla_p(N-1)}.
\end{equation}
In the case $p=4$, this can be written more compactly as follows. Let $N-1 = \Delta_4 m + t, 0 \leq t \leq m$. Then
\begin{equation}
\label{eq:Phi4}
\Phi(4, N) = 1+(m+t)2^m.
\end{equation}
Note for later use the following property of $\Delta_p$:
\begin{equation}
\label{eq:delta}
\Delta_p n = \Delta_p (n-1) + \Delta_{p-1}n, \quad \forall p \geq 4, n \geq 1.
\end{equation} 

Let $p \geq 3$. We call an arrangement of disks on $p$ pegs a \textit{configuration} if no disk is placed on top of a larger one. Note that the set of configurations of $N$ disks can be identified with the set $[p]^{[N]}$ of functions $[N] \rightarrow [p]$, in particular, given a configuration $\mathbf{u}$, we let $\mathbf{u}^{-1}(x)$ denote the set of disks placed on peg $x$. Furthermore, $\mathbf{u}|_S$ represents the configuration obtained from $\mathbf{u}$ by deleting all disks in $[N] \setminus S$.

We define the \textit{Hanoi graph} $\mathcal{H}(p, N)$ as having vertex set $[p]^{[N]}$, and an edge between two vertices $u$ and $v$ if the corresponding configurations can be obtained from one another by a single disk move. We consider $\mathcal{H}(p, N)$ to be a metric space with the usual metric that has distance $1$ between any two adjacent vertices.

If $\gamma : [T] \rightarrow \mathcal{H}(p, N)$ is any path, we let $\ell(\gamma) := T-1$ denote its length. We sometimes write $\gamma_t$ instead of $\gamma(t)$, to denote the configuration at time $t$. For any $0 \leq t \leq T-2$, we let $D_{\gamma, t}$ be the unique disk moved between $\gamma(t)$ and $\gamma(t+1)$. We say that $D_{\gamma, t}$ \textit{is moved at time $t$}. For any $0 \leq t_1 \leq t_2 \leq T-1$, we let $\gamma|_{[t_1, t_2]}$ denote the path going through configurations $\gamma(t_1), \gamma(t_1+1), \ldots, \gamma(t_2)$.

The path $\gamma$ is called \textit{essential} if any disk is moved by $\gamma$ at least once. Note that in this case the path $\gamma^* : [T] \rightarrow  \mathcal{H}(p, N)$ given by $\gamma^*(t) := \gamma(T-t-1)$ is also essential. By definition,
$$\Gamma(p, N) := \min \{\ell(\gamma) : \gamma \textrm{ is an essential path in $\mathcal{H}(p, N)$}\}.$$

The structure of shortest paths (geodesics) in the Hanoi graph has been studied before (see \cite{Hinz14}). Note that an essential path need not be a geodesic.

We now introduce a crucial definition, due to Bousch. Let $E \subset \mathbb{N}$ be finite. For any $L \in \mathbb N$ we define
\begin{equation}
\label{eq:psiLDef}
\Psi_L(E) := (1-L)2^L - 1 + \sum_{n \in E}2^{\min\{\nabla_4 n, L\}},
\end{equation}
and further
\begin{equation}
\label{eq:psiDef}
\Psi(E) := \sup_{L \in \mathbb N} \Psi_L(E).
\end{equation}
The function $\Psi(E)$ is well-defined, as $\Psi_L(E)$ becomes negative for large $L$, and $\Psi_0(E) = |E|$. Bousch showed the following.
\begin{theorem}[Theorem $2.9$, \cite{Bousch14}]
\label{thm:mainBousch}
Let $a \in [4]$ arbitrary. Let $\mathbf{u}, \mathbf{v} \in \mathcal{H}(4, N)$ be two configurations such that in $\mathbf{v}$, peg $a$ and some other peg $b$ do not contain any disks. Then $d(\mathbf{u}, \mathbf{v}) \geq \Psi(\mathbf{u}^{-1}(a))$.
\end{theorem}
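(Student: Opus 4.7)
The plan is to establish, for each fixed $L \in \mathbb{N}$, the stronger assertion $d(\mathbf{u}, \mathbf{v}) \geq \Psi_L(\mathbf{u}^{-1}(a))$; the theorem then follows by taking the supremum over $L$ in~\eqref{eq:psiDef}. I would proceed by induction on $L$, with an inner induction on the largest disk present in $E := \mathbf{u}^{-1}(a)$ (equivalently, on $N$).

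For the base case $L = 0$, we have $\Psi_0(E) = |E|$, and since every disk in $E$ must be moved at least once to empty peg $a$ in $\mathbf{v}$, the bound is immediate. For $L \geq 1$, I would fix a shortest path $\gamma$ from $\mathbf{u}$ to $\mathbf{v}$ and focus on the largest disk $D$ of $E$. Since $\mathbf{v}$ leaves both pegs $a$ and $b$ empty, $D$ must be moved by $\gamma$; let $t^\star$ be the \emph{last} moment at which $D$ is moved, so $D$ transitions from peg $a$ to some peg $c \in [4] \setminus \{a, b\}$. Just before $t^\star$, peg $a$ contains only $D$, meaning every disk smaller than $D$ that sat on top of $D$ in $\mathbf{u}$ has already been relocated; just after $t^\star$, peg $c$ carries $D$ with no smaller disk above it. I would split $\gamma$ at $t^\star$ into a prefix $\gamma_1$ and suffix $\gamma_2$, restrict both to disks smaller than $D$, and apply the inductive hypothesis: to $\gamma_1$ with the empty-pair $(a, c)$, and to $\gamma_2$ with the empty-pair $(a, b)$ (since by then $D$ lives on peg $c$ and can be ignored in the smaller-disk subproblem).

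The crucial algebraic step is to verify that the two lower bounds obtained from the inductive hypothesis, together with the single move of $D$, combine to give exactly $\Psi_L(E)$. Here the identity $\Delta_4 n = \Delta_4(n-1) + n$ from~\eqref{eq:delta} should play the decisive role, allowing the contribution $2^{\min\{\nabla_4 n_D,\, L\}}$ of $D$ to match a telescoping sum arising from the recursion. I expect the main obstacle to be the parameter choice in each recursive call — whether $\Psi_L$, $\Psi_{L-1}$, or even a smaller $\Psi$ should be invoked on $\gamma_1$ versus $\gamma_2$ — and verifying that the engineered correction term $(1-L)2^L - 1$ absorbs all the slack so the induction closes without losing a constant. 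A secondary subtlety is that focusing on the \emph{largest} disk may not be optimal when $\nabla_4 n_D$ exceeds $L$; in that regime it may be necessary instead to split on the largest disk whose $\nabla_4$-level is at most $L$, and to refine the inductive hypothesis so that the excess disks at higher $\nabla_4$-levels can be handled in bulk by the saturated exponent $\min\{\nabla_4 n, L\} = L$.
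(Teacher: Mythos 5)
This statement is quoted verbatim from Bousch (Theorem $2.9$ of \cite{Bousch14}); the present paper gives no proof of it, so the only benchmark is Bousch's original argument, which runs to several pages of case analysis supported by the structural lemmas on $\Psi$ (of which Lemmas~\ref{lem:removal} and~\ref{lem:union} above are two instances). Measured against that, your proposal is an outline with the hard part missing, and it contains one concrete error. You take $t^\star$ to be the \emph{last} move of the largest disk $D$ of $E=\mathbf{u}^{-1}(a)$ and assert that at that moment $D$ leaves peg $a$ and that peg $a$ then contains only $D$. Neither holds: $D$ may move many times, and at its last move it can be sitting on any peg. The properties you want (peg $a$ reduced to $\{D\}$, all of $E\setminus\{D\}$ already relocated) hold at the \emph{first} move of $D$, and Bousch's induction is indeed anchored there --- but then the suffix of the path is long and uncontrolled, and bounding it is where the real work lies.

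More fundamentally, the step you defer (``the crucial algebraic step'') is the entire theorem. Your per-$L$ stratification does not close under a split of the path: if the prefix and suffix each contribute a bound of the form $\Psi_L(\cdot)$, the correction term $(1-L)2^L-1$ appears twice on one side and once on the other, leaving a deficit of $(L-1)2^L+1$ that the sums $\sum 2^{\min\{\nabla_4 n, L\}}$ must absorb; you give no mechanism for this, and you concede you do not know whether to invoke $\Psi_L$, $\Psi_{L-1}$, or a different split disk. Bousch avoids a per-$L$ induction altogether: he inducts on $N$ with the bound $\Psi$ itself (the supremum), and closes the induction using inequalities such as $\Psi(A)-\Psi(A-\{a\})\le 2^{s-1}$ (Lemma~\ref{lem:removal}) and the union inequality (Lemma~\ref{lem:union}), together with a case analysis on which peg the large disk moves to and whether it returns. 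Without carrying out that bookkeeping --- or an equivalent --- the proposal is a plan, not a proof.
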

It turns out that $\Psi([N]) = \frac{\Phi(4, N+1)-1}{2}$. In combination with Theorem~\ref{thm:mainBousch}, this easily implies Theorem~\ref{thm:bousch}. We record this last fact below.
\begin{lemma}
\label{lem:obs3}
For all $N \geq 2$,
$$\Psi([N]) = \frac{\Phi(4, N+1) - 1}{2} = \min_{\substack{a+b = N\\a, b \geq 1}}\left\{\Phi(4, a) + \Phi(3, b)\right\}.$$
\end{lemma}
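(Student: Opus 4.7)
\emph{Proof proposal.} The statement is a conjunction of two equalities, and I would prove each separately.

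\medskip

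\emph{The Frame--Stewart identity.} The equality
$$\frac{\Phi(4, N+1) - 1}{2} = \min_{\substack{a+b = N \\ a, b \ge 1}} \{\Phi(4, a) + \Phi(3, b)\}$$
I would derive by direct manipulation of the recursion \eqref{eq:FrameStewart}. Writing $\Phi(4, N+1) = \min_{1 \le \ell \le N} \{2\Phi(4, \ell) + \Phi(3, N+1-\ell)\}$ and using $\Phi(3, m) = 2\Phi(3, m-1) + 1$, the expression inside the min becomes $2\Phi(4, \ell) + 2\Phi(3, N-\ell) + 1$. Setting $a = \ell$ and $b = N - \ell$, this shows
$$\Phi(4, N+1) = 1 + 2 \min_{\substack{a+b=N \\ a \ge 1,\; b \ge 0}} \{\Phi(4, a) + \Phi(3, b)\}.$$
To restrict to $b \ge 1$, I would note that the boundary term $b = 0$ contributes $\Phi(4, N)$, whereas the split $(a, b) = (N-1, 1)$ contributes $\Phi(4, N-1) + 1$; and $\Phi(4, N) - \Phi(4, N-1) = 2^{\nabla_4(N-1)} \ge 1$ by \eqref{eq:PhiDef}, so the minimum is attained with $b \ge 1$.

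\medskip

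\emph{Evaluation of $\Psi([N])$.} For the equality $\Psi([N]) = \frac{\Phi(4, N+1) - 1}{2}$, I would first locate the supremum in \eqref{eq:psiDef} by a telescoping calculation:
$$\Psi_L([N]) - \Psi_{L-1}([N]) = 2^{L-1}\bigl( |\{n < N : \nabla_4 n \ge L\}| - L \bigr) = 2^{L-1} \bigl( \max(N - \Delta_4 L,\, 0) - L \bigr).$$
Setting $M := \nabla_4 N$, so $\Delta_4 M \le N < \Delta_4 M + M + 1$ by \eqref{eq:delta}, this difference is positive for $L \le M-1$ and non-positive for $L \ge M$, hence the sup is attained at $L = M-1$ (and also at $L = M$ in the exceptional case $N = \Delta_4 M + M$). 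To compute $\Psi_{M-1}([N])$, I would split the sum in \eqref{eq:psiLDef} according to whether $n < \Delta_4 M$ (so $\min\{\nabla_4 n, M-1\} = \nabla_4 n$, with total contribution $\Phi(4, \Delta_4 M) = 1 + (M-1)2^M$ via \eqref{eq:Phi4}) or $n \ge \Delta_4 M$ (so $\nabla_4 n = M$ and $\min = M-1$, contributing $(N - \Delta_4 M) 2^{M-1}$). Collecting terms yields $\Psi_{M-1}([N]) = (M + N - \Delta_4 M) 2^{M-1}$, and a direct evaluation of $\Phi(4, N+1)$ via \eqref{eq:Phi4}, handling the subcases $N - \Delta_4 M < M$ and $N - \Delta_4 M = M$ via \eqref{eq:delta}, shows $\frac{\Phi(4, N+1) - 1}{2} = (M + N - \Delta_4 M) 2^{M-1}$ as well.

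\medskip

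\emph{Main obstacle.} The arithmetic is routine, but the bookkeeping is fiddly: one must correctly track when the argument of $\min\{\nabla_4 n, L\}$ switches, and carefully handle the boundary case $N = \Delta_4(M+1) - 1$ where the optimal $L$ is not unique. The clean telescoping identity above is what makes the maximization tractable, and \eqref{eq:delta} is the bridge between $\Delta_4 M$ and $\Delta_4(M+1)$ that allows the matching of $\Psi_{M-1}([N])$ with the explicit expression for $\Phi(4, N+1)$ given by \eqref{eq:Phi4}.
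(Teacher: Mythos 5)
Your proposal is correct, but it is considerably more self-contained than the paper's proof, which is a two-line citation: the identity $\Psi([N]) = \frac{\Phi(4,N+1)-1}{2}$ is quoted as Lemma~2.2 of Bousch's paper, and the second equality is attributed to the recursion \eqref{eq:FrameStewart} without further comment. Your treatment of the second equality is exactly the paper's intended argument with the details filled in (the reduction $\Phi(3,m)=2\Phi(3,m-1)+1$ and the comparison of the boundary split $b=0$ against $(a,b)=(N-1,1)$ via $\Phi(4,N)-\Phi(4,N-1)=2^{\nabla_4(N-1)}\ge 1$, which is where the hypothesis $N\ge 2$ enters). For the first equality you re-prove Bousch's lemma from scratch, and your computation is sound: the telescoping difference $\Psi_L([N])-\Psi_{L-1}([N])=2^{L-1}\bigl(\max(N-\Delta_4 L,0)-L\bigr)$ correctly locates the maximizer at $L=M-1$ with $M=\nabla_4 N$, and both $\Psi_{M-1}([N])$ and $\frac{\Phi(4,N+1)-1}{2}$ evaluate to $(M+N-\Delta_4 M)2^{M-1}$. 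In fact your final step is simpler than you suggest: since \eqref{eq:Phi4} is stated for all $0\le t\le m$, writing $N=\Delta_4 M+t$ with $t=N-\Delta_4 M\le M$ gives $\Phi(4,N+1)=1+(M+t)2^M$ in one stroke, with no need to separate the subcases $t<M$ and $t=M$. What your route buys is independence from the cited source; what the paper's route buys is brevity, since the hard analytic content of $\Psi$ is already developed in Bousch's work, which the rest of the paper relies on anyway (Theorem~\ref{thm:mainBousch}, Lemmas~\ref{lem:removal} and~\ref{lem:union}).
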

\begin{proof}
The first identity is Lemma $2.2$ from \cite{Bousch14}. The second follows from \eqref{eq:FrameStewart}.
\end{proof}
Lemma~\ref{lem:obs3} motivates the following definition. Let $N \geq 1$ and $\mathbf{u}$ be the configuration with all $N$ disks on peg $0$. A configuration $\mathbf{c}$ of $N$ disks on pegs $\{2, 3\}$ such that $d(\mathbf{u}, \mathbf{c}) \leq \frac{\Phi(4, N+1)-1}{2}$ is called \textit{a midpoint configuration of $N$ disks on $4$ pegs}. The existence of such configurations for all $N$ follows from the Frame-Stewart algorithm. Theorem~\ref{thm:mainBousch} shows that in fact $d(\mathbf{u}, \mathbf{c}) = \frac{\Phi(4, N+1)-1}{2}$ whenever $\mathbf{c}$ is a midpoint configuration, but we will not use this stronger statement.

We shall also need the following two lemmas.
\begin{lemma}[Lemma $2.6$, \cite{Bousch14}]
\label{lem:removal}
Let $A \subset \mathbb N$ finite, and $s$ a natural number such that $A - [\Delta_4 s]$ has at most $s$ elements. Then
$$\Psi(A) - \Psi(A - \{a\}) \leq 2^{s-1}$$
for all $a \in A$.
\end{lemma}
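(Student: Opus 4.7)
The natural strategy is to exploit the fact that $\Psi$ is defined as a supremum. If $L_0$ achieves the maximum in $\Psi(A) = \sup_L \Psi_L(A)$, then trivially $\Psi(A - \{a\}) \geq \Psi_{L_0}(A - \{a\})$, so
\[
\Psi(A) - \Psi(A - \{a\}) \leq \Psi_{L_0}(A) - \Psi_{L_0}(A - \{a\}) = 2^{\min\{\nabla_4 a,\, L_0\}}.
\]
The entire proof therefore reduces to showing that, under the hypothesis $|A - [\Delta_4 s]| \leq s$, one can choose the maximizer $L_0$ to satisfy $L_0 \leq s-1$; the conclusion $2^{\min\{\nabla_4 a, L_0\}} \leq 2^{L_0} \leq 2^{s-1}$ then follows immediately.

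The main step, and the only place where the hypothesis on $A$ enters, is to analyze the first difference of $L \mapsto \Psi_L(A)$. A direct computation from \eqref{eq:psiLDef} gives the clean identity
\[
\Psi_L(A) - \Psi_{L-1}(A) = 2^{L-1}\bigl(|A - [\Delta_4 L]| - L\bigr),
\]
because the constant term contributes $-L \cdot 2^{L-1}$ and each $n \in A$ with $\nabla_4 n \geq L$ (equivalently $n \geq \Delta_4 L$) contributes a jump of $2^{L-1}$ to the sum, while the other $n$'s contribute nothing. This is the key calculation and, I expect, the main obstacle: making sure the telescoping works out with the right sign and factor, and noticing that the cut-off $\Delta_4 L$ inside the indicator is exactly what lets the hypothesis on $A$ be applied.

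With this identity in hand, the hypothesis gives $|A - [\Delta_4 L]| \leq |A - [\Delta_4 s]| \leq s$ for every $L \geq s$. For $L = s$ this means $\Psi_s(A) \leq \Psi_{s-1}(A)$, and for every $L \geq s+1$ we have the strict inequality $|A - [\Delta_4 L]| \leq s < L$, so $\Psi_L(A) < \Psi_{L-1}(A)$. Combined with the fact that $\Psi_L(A) \to -\infty$ as $L \to \infty$ (noted after \eqref{eq:psiDef}), the sequence $L \mapsto \Psi_L(A)$ therefore attains its maximum at some $L_0 \leq s-1$.

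Plugging this $L_0$ into the opening inequality yields $\Psi(A) - \Psi(A - \{a\}) \leq 2^{L_0} \leq 2^{s-1}$, as required. No case distinction on whether $a \in [\Delta_4 s]$ or not is needed, since the bound $\min\{\nabla_4 a, L_0\} \leq L_0$ is used without any information about $a$; the hypothesis is only used to locate where the supremum defining $\Psi(A)$ lives.
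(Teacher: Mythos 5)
Your proof is correct: the first-difference identity $\Psi_L(A)-\Psi_{L-1}(A)=2^{L-1}\bigl(|A-[\Delta_4 L]|-L\bigr)$ checks out, and it does show that the supremum defining $\Psi(A)$ is attained at some $L_0\le s-1$, from which the bound $2^{\min\{\nabla_4 a,L_0\}}\le 2^{s-1}$ follows (the case $s=0$ forces $A=\emptyset$ and is vacuous). Note that the paper itself gives no proof of this statement -- it is quoted verbatim as Lemma 2.6 of Bousch -- and your argument is essentially a faithful reconstruction of Bousch's original one, so there is nothing to compare beyond saying it is the intended route.
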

\begin{lemma}[Lemma $2.8$, \cite{Bousch14}]
\label{lem:union}
Let $A, B \subset \mathbb N$ be finite sets. Then
\begin{equation*}
\Psi(A) + \Psi(B) \geq \frac{\Phi(4, N+3) - 5}{4},
\end{equation*}
where $N : = |A \cup B|$.
\end{lemma}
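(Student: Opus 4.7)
\textbf{Proof proposal for Lemma \ref{lem:union}.} The plan is in two steps: (i) reduce to the case where $A$ and $B$ are disjoint and $A \sqcup B = [N]$; then (ii) evaluate $\Psi_L(A) + \Psi_L(B)$ at a single carefully chosen $L$ and read off the bound.

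\emph{Step 1 (reduction).} For each $L$, the map $E \mapsto \Psi_L(E)$ is nondecreasing under inclusion because the summands $2^{\min\{\nabla_4 n, L\}}$ are positive; hence so is $\Psi$. In particular, for any $x \in A \cap B$, replacing $B$ by $B \setminus \{x\}$ leaves $|A \cup B|$ unchanged and can only decrease $\Psi(A) + \Psi(B)$, so I may assume $A \cap B = \emptyset$. Next, if $A \cup B$ has $N$ elements but does not equal $[N]$, let $\sigma \colon A \cup B \to [N]$ be the unique order-preserving bijection. Then $\sigma(n) \leq n$, and since $\nabla_4$ is nondecreasing, $\min\{\nabla_4 \sigma(n), L\} \leq \min\{\nabla_4 n, L\}$ for every $L$, so $\Psi_L(\sigma A) + \Psi_L(\sigma B) \leq \Psi_L(A) + \Psi_L(B)$. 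Taking $\sup_L$ gives $\Psi(\sigma A) + \Psi(\sigma B) \leq \Psi(A) + \Psi(B)$. Hence it suffices to prove the bound when $A \sqcup B = [N]$.

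\emph{Step 2 (computation).} Write $N + 2 = \Delta_4 m + t$ with $0 \leq t \leq m$; by \eqref{eq:Phi4} the target becomes $(m+t)2^{m-2} - 1$. Set $L := m - 2$ (the trivial case $N = 0$ is checked directly). Since $A$ and $B$ partition $[N]$,
\begin{equation*}
\Psi_L(A) + \Psi_L(B) \;=\; 2(1-L)2^L - 2 + \sum_{n \in [N]} 2^{\min\{\nabla_4 n,\, L\}}.
\end{equation*}
Split the sum at $\Delta_4(m-1)$. The indices $n < \Delta_4(m-1)$ satisfy $\nabla_4 n \leq m - 2 = L$, so their contribution is $\Phi(4, \Delta_4(m-1))$, which by \eqref{eq:Phi4} applied to $\Delta_4(m-1) - 1 = \Delta_4(m-2) + (m-2)$ equals $1 + (2m-4)2^{m-2}$. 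The remaining $N - \Delta_4(m-1) = \Delta_3 m + t - 2 = m + t - 2$ indices each contribute $2^{m-2}$. Substituting and simplifying,
\begin{equation*}
\Psi_L(A) + \Psi_L(B) \;=\; 2(3-m)2^{m-2} - 1 + (3m + t - 6)2^{m-2} \;=\; (m+t)2^{m-2} - 1,
\end{equation*}
which is exactly the target.

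\emph{Main obstacle.} The only nonroutine point is pinning down $L = m-2$: smaller $L$ truncates the high-$\nabla_4$ indices too aggressively, while larger $L$ lets the term $2(1-L)2^L$ dominate negatively. At $L = m-2$ these two effects balance, and in fact the inequality is tight in the disjoint-partition case. Once this $L$ is identified, the remainder is bookkeeping with \eqref{eq:Phi4}, \eqref{eq:delta}, and the identity $\Delta_3 m = m$.
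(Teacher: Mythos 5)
Your argument is correct, and it checks out in detail: the monotonicity of $\Psi_L$ under inclusion justifies discarding $A\cap B$; the order-preserving compression onto $[N]$ only decreases each $\Psi_L$ because $\nabla_4$ is nondecreasing, so it suffices to treat a partition $A\sqcup B=[N]$; and with $N+2=\Delta_4 m+t$, $0\le t\le m$ (so $m\ge 2$ once $N\ge1$), the single choice $L=m-2$ gives $\Psi_{m-2}(A)+\Psi_{m-2}(B)=(m+t)2^{m-2}-1=\frac{\Phi(4,N+3)-5}{4}$, using $\Phi(4,\Delta_4(m-1))=1+(2m-4)2^{m-2}$ and the count $N-\Delta_4(m-1)=m+t-2\ge0$ of indices truncated at level $L$. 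There is nothing in the paper to compare against: the statement is imported verbatim as Lemma $2.8$ of \cite{Bousch14} and no proof is given here, so you have supplied a self-contained proof of a quoted result. Your route — reduce to a disjoint partition of $[N]$ and exhibit one witness value of $L$ at which the supremum already meets the bound — is the natural one and is in the spirit of Bousch's own treatment of $\Psi$; the only points deserving explicit care are the ones you already flag, namely the degenerate case $N=0$ (where $m=1$ and $L=m-2$ is unavailable, but both sides vanish) and the fact that the two sups in $\Psi(\sigma A)+\Psi(\sigma B)\le\Psi(A)+\Psi(B)$ may be taken separately since the comparison $\Psi_L(\sigma A)\le\Psi_L(A)$ holds for every $L$ for each set individually.
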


Finally, we shall need the following recursive lower bound for $\Gamma$.
\begin{lemma}[Corollary $1$, \cite{Chen2004}]
\label{lem:recursive}
Let $p \geq 4$ and $N \geq 2$. Then for every $1 \leq \ell \leq N-1$,
\begin{equation*}
\Gamma(p, N) \geq 2\min\{\Gamma(p, N - \ell), \Gamma(p-1, \ell)\}.
\end{equation*}
\end{lemma}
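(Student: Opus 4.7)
The plan is to extract from any essential path $\gamma$ of length $T = \Gamma(p, N)$ two disjoint sub-sequences of moves whose lengths are each at least $m := \min\{\Gamma(p, N-\ell), \Gamma(p-1, \ell)\}$, from which $T \geq 2m$ follows immediately. A cleaner variant would actually give the stronger additive bound $T \geq \Gamma(p, N-\ell) + \Gamma(p-1, \ell)$, which trivially implies the lemma.

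I would partition the disks into the set $S$ of the $\ell$ smallest and the set $L$ of the $N-\ell$ largest, and analyse the subsequences of $\gamma$-moves restricted to each group. Erasing the $S$-disks from every configuration of $\gamma$ produces a sequence of $L$-only configurations in which consecutive pairs still differ by a single valid move (smaller disks always live on top of larger ones, so their deletion does not disturb the non-crossing order of the remaining disks), and every $L$-disk moves at least once because $\gamma$ is essential. The projected sequence is therefore an essential path for $N-\ell$ disks on $p$ pegs, so the number of $L$-moves in $\gamma$ is at least $\Gamma(p, N-\ell) \geq m$.

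The delicate part is the matching bound of $\Gamma(p-1, \ell)$ for the number of $S$-moves. Naively erasing the $L$-disks yields only $\Gamma(p, \ell)$, which is too weak since $\Gamma(p-1, \ell) \geq \Gamma(p, \ell)$. To squeeze out the ``$p-1$'' factor, I would exploit the following pinch: at the moment any $L$-disk is moved, say from peg $a$ to peg $b$, no $S$-disk can sit on peg $a$ (the $L$-disk must be on top there) or on peg $b$ (the destination's top must be larger than the moving disk), so all $\ell$ small disks are confined to the $p-2$ pegs $[p]\setminus\{a,b\}$. Accumulating these repeated squeeze constraints across all $L$-moves, one would argue that the $S$-moves collectively cannot be cheaper than if the $S$-disks had access to only $p-1$ pegs throughout. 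I expect this to be the main obstacle: between two consecutive $L$-moves the $S$-disks may in principle use all $p$ pegs by perching on top of the stationary $L$-disks, so the effective restriction to $p-1$ pegs is not pointwise in time but only in amortised effect. Making this precise will likely require a case analysis in the spirit of Szegedy's argument in~\cite{Szegedy99} (splitting on whether the $L$-disks move often, in which case their pinches force many additional $S$-rearrangements, or rarely, in which case the $L$-structure itself already forces $\tau \geq 2\Gamma(p, N-\ell)$), and this is the balancing step that the rigorous write-up must import from~\cite{Chen2004}.
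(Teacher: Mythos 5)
There is a genuine gap here, and in fact the strategy cannot be repaired. First, note that the paper does not prove this lemma at all: it is quoted as Corollary $1$ of \cite{Chen2004}, so the only ``paper's proof'' is a citation. Your plan is to count, over the whole path, the moves of the $N-\ell$ largest disks (at least $\Gamma(p,N-\ell)$, via projection --- this half is correct) and, separately, the moves of the $\ell$ smallest disks (claimed to be at least $\Gamma(p-1,\ell)$), and then to add the two disjoint counts. This would prove the additive bound $\Gamma(p,N)\geq \Gamma(p,N-\ell)+\Gamma(p-1,\ell)$, which is false. Take $p=4$, $N=5$, $\ell=4$: by Theorem~\ref{thm:main1}, $\Gamma(4,5)=3+(\Phi(4,5)-5)/4=5$, so there is an essential path of length $5$ in $\mathcal{H}(4,5)$, in which each of the five disks necessarily moves exactly once; hence the four smallest disks move exactly $4$ times in total, whereas $\Gamma(3,4)=1+2^{2}=5$ by Theorem~\ref{thm:szegedy}. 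So the ``matching bound of $\Gamma(p-1,\ell)$ for the number of $S$-moves'', which you yourself single out as the step to be imported from \cite{Chen2004}, is not merely unproven but false, and no amortised pinch argument can establish it. The same example kills the additive bound: $\Gamma(4,1)+\Gamma(3,4)=6>5=\Gamma(4,5)$.

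The shape of the statement already signals this: a bound of the form $2\min\{x,y\}$ cannot come from showing that two disjoint sets of moves have sizes $\geq x$ and $\geq y$ respectively (that would give $x+y$). The correct argument splits the path in \emph{time} into two windows and proves that \emph{each window} has length at least $\min\{\Gamma(p,N-\ell),\Gamma(p-1,\ell)\}$ via a dichotomy: a given window either contains an essential motion of one group using all $p$ pegs, or else forces a transfer of the other group during which some peg is unavailable to it, which is where the $(p-1)$-peg term enters. Your projection argument for the large disks is a usable ingredient of such a proof, but the global move-count for the small disks must be abandoned in favour of locating the two time windows.
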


\section{\normalsize The length of the shortest essential path}

We start with the following lemma.
\begin{lemma}
\label{lem:Two1}
Let $N \geq 1$ and $\mathbf{u}, \mathbf{v} \in \mathcal{H}(4, N)$ such that $\mathbf{u}^{-1}(\{2, 3\}) = \emptyset$ and $\mathbf{v}^{-1}(\{0, 1\}) = \emptyset$. Then
$$d(\mathbf{u}, \mathbf{v}) \geq 1 + \frac{\Phi(4, N+2) - 5}{4}.$$

Moreover, this inequality is tight.
\end{lemma}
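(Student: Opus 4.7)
The plan is to pivot the analysis on the largest disk. Given any path $\gamma : [T] \to \mathcal{H}(4, N)$ from $\mathbf{u}$ to $\mathbf{v}$, disk $N-1$ must at some step move from $\{0, 1\}$ to $\{2, 3\}$, since $\mathbf{u}(N-1) \in \{0, 1\}$ while $\mathbf{v}(N-1) \in \{2, 3\}$. I take $t$ to be the smallest index at which $\gamma_t(N-1) \in \{0, 1\}$ and $\gamma_{t+1}(N-1) \in \{2, 3\}$. The two peg-swap symmetries $0 \leftrightarrow 1$ and $2 \leftrightarrow 3$ preserve the structure of the problem (in particular the conditions $\mathbf{u}^{-1}(\{2,3\}) = \emptyset$ and $\mathbf{v}^{-1}(\{0,1\}) = \emptyset$), so I may assume the move sends disk $N-1$ from peg $0$ to peg $2$. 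Being the largest disk and movable at time $t$, disk $N-1$ is alone on peg $0$ in $\gamma_t$; and peg $2$ must be empty for the move to be legal. Hence in both $\gamma_t$ and $\gamma_{t+1}$ the other $N-1$ disks lie on pegs $\{1, 3\}$.

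Next, I apply Theorem \ref{thm:mainBousch} twice, once for each half of $\gamma$. With $\mathbf{u}$ as target (pegs $2, 3$ empty), it gives $d(\mathbf{u}, \gamma_t) \geq \Psi(\gamma_t^{-1}(3))$ (the choice $a = 2$ is vacuous since peg $2$ is empty in $\gamma_t$); with $\mathbf{v}$ as target (pegs $0, 1$ empty), it gives $d(\gamma_{t+1}, \mathbf{v}) \geq \Psi(\gamma_{t+1}^{-1}(1))$. Since the lone move at time $t$ only touches disk $N-1$, we have $\gamma_{t+1}^{-1}(1) = \gamma_t^{-1}(1)$. Writing $A := \gamma_t^{-1}(3)$ and $B := \gamma_t^{-1}(1)$, the sets $A$ and $B$ are disjoint and together contain exactly $[N] \setminus \{N-1\}$, so $|A \cup B| = N-1$, and Lemma \ref{lem:union} yields $\Psi(A) + \Psi(B) \geq (\Phi(4, N+2) - 5)/4$. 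Combining,
\begin{equation*}
\ell(\gamma) = T - 1 \geq d(\mathbf{u}, \gamma_t) + 1 + d(\gamma_{t+1}, \mathbf{v}) \geq 1 + \frac{\Phi(4, N+2) - 5}{4},
\end{equation*}
establishing the claimed lower bound.

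For tightness, my plan is a direct construction. I will choose a partition $A \sqcup B = [N-1]$ attaining equality in Lemma \ref{lem:union}, place disk $N-1$ alone at the bottom of peg $0$ in $\mathbf{u}$, and arrange the remaining disks on $\{0, 1\}$ so that a Bousch-style midpoint subpath of length exactly $\Psi(A)$ carries them into the split ``$A$ on peg $3$, $B$ on peg $1$''; define $\mathbf{v}$ symmetrically on $\{2, 3\}$. Then the concatenation of this subpath, the single move of disk $N-1$ from peg $0$ to peg $2$, and the mirror subpath into $\mathbf{v}$ yields a path of the required length. The main obstacle is exhibiting such a tight partition together with subpaths of the correct lengths that can be executed legally in the presence of disk $N-1$; both ingredients should be extractable from the constructive case analysis in Bousch's proof of Lemma \ref{lem:union} in \cite{Bousch14}, which is itself based on midpoint configurations.
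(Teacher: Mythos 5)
Your proof of the lower bound is correct and is essentially the paper's own argument: split the path at the move of disk $N-1$ across the ``cut'' $\{0,1\}\to\{2,3\}$, apply Theorem~\ref{thm:mainBousch} to each half with targets $\mathbf{u}$ and $\mathbf{v}$ respectively, and combine via Lemma~\ref{lem:union} with $|A\cup B|=N-1$. (The paper's displayed chain ends with a denominator $2$ where it should read $4$; your version has the correct constant.)

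The tightness half, however, is not a proof but a plan with an acknowledged hole, and the hole is the entire content of that half. You defer both the choice of the partition and the existence of legal subpaths of the right lengths to ``the constructive case analysis in Bousch's proof of Lemma~\ref{lem:union}'', but that lemma is a purely combinatorial inequality about $\Psi$; equality in it does not hand you paths in $\mathcal{H}(4,N)$, and nothing in your sketch addresses how the small disks are to be moved legally while disk $N-1$ and the rest of $B$ sit on pegs $0$ and $1$. The paper closes this gap with a concrete Frame--Stewart-type construction that avoids extracting anything from Bousch's proof: for $a+b=N$, $b\ge 1$, take $A=[a]$ (the $a$ smallest disks, placed on pegs $0,1$ in a midpoint configuration, hence movable to peg $3$ in at most $\frac{\Phi(4,a+1)-1}{2}$ legal moves, since they are smaller than everything beneath them) and $B=\{a,\dots,N-2\}$ (stacked on one peg and moved to peg $2$ by the three-peg algorithm in $2^{b-1}-1$ moves after disk $N-1$ arrives there); the total is at most $\frac{\Phi(4,a+1)+\Phi(3,b)}{2}$, and minimizing over $a,b$ gives exactly $1+\frac{\Phi(4,N+2)-5}{4}$ by Lemma~\ref{lem:obs3}. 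In other words, the optimization that you hoped to import from Lemma~\ref{lem:union} is instead done explicitly through the identity of Lemma~\ref{lem:obs3}, and the legality issue is resolved by choosing $A$ to consist of the smallest disks. As written, your tightness argument would need this (or an equivalent) construction spelled out to be complete.
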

\begin{proof}
Let $\gamma : [T] \rightarrow \mathcal{H}(4, N)$ be a shortest path between $\mathbf{u}$ and $\mathbf{v}$. Let $t_1 \in [T-1]$ be the first time when the disk $N-1$ moves to one of the pegs $2$ and $3$. Then we may assume without lack of generality that $\gamma_{t_1}(N-1) = 0$ and $\gamma_{t_1+1}(N-1) = 2$.

Set
\begin{align*}
A &= \{z \in [N-1] : \gamma_{t_1}(z) = 1\}\\
B &= \{z \in [N-1] : \gamma_{t_1}(z) = 3\}.
\end{align*}

Note that $A \dot\cup B = [N-1]$, as no other disk besides $N-1$ is on pegs $0$ or $2$ at time $t_1$. 

By Theorem~\ref{thm:mainBousch},
\begin{equation*}
d(\gamma(t_1), \gamma(0)) \geq \Psi(B),
\end{equation*}
as pegs $2$ and $3$ are empty in $\gamma(0) = \mathbf{u}$.

Similarly, by Theorem~\ref{thm:mainBousch} and the fact that all disks are placed on pegs $2$ and $3$ in $\mathbf{v}$,
\begin{equation*}
d(\gamma(t_1+1), \gamma(T-1)) \geq \Psi(A).
\end{equation*}

Consequently by Lemma~\ref{lem:union}, and the fact that the disk $N-1$ moves once at time $t_1$,
\begin{align*}
\ell(\gamma) &\geq d(\gamma(0), \gamma(t_1)) + 1 + d(\gamma(t_1+1), \gamma(T-1))\\
&\geq \Psi(B) + 1 + \Psi(A)\\
&\geq 1 + \frac{\Phi(4, N+2)-5}{2}.
\end{align*}

We now show that the inequality is tight. Let $a, b \in \mathbb N$ arbitrary such that $a+b = N$ and $b \geq 1$. Consider a configuration $\mathbf{u}_{a, b}$ with the disk $N-1$ on peg $1$, disks $N-b, N-b+1, \ldots, N-2$ on peg $0$, and disks $0, \ldots, a-1$ arranged on pegs $0$ and $1$ in such a way that they form a midpoint configuration of $a$ disks on $4$ pegs.

Then we can move the disks $0, \ldots, a-1$ to peg $3$ using at most $\frac{\Phi(4, a+1)-1}{2}$ moves.

Afterwards, we can move the disk $N-1$ to peg $2$.

Finally, we can move disks $N-b, \ldots, N-2$ to peg $2$ using $2^{b-1}-1$ moves.

Let $\mathbf{v}_{a, b}$ be the resulting configuration. It has disks $N-b, \ldots, N-1$ on peg $2$, and disks $0, \ldots, a-1$ on peg $3$. Also
\begin{equation*}
d(\mathbf{u}_{a, b}, \mathbf{v}_{a, b}) \leq \frac{\Phi(4, a+1)-1}{2} + 2^{b-1} \leq \frac{\Phi(4, a+1) + 2^b-1}{2} = \frac{\Phi(4, a+1) + \Phi(3, b)}{2}.
\end{equation*}
We now minimize over all choices of $a$ and $b$. This gives configurations $\mathbf{u}$ and $\mathbf{v}$ such that
\begin{align*}
d(\mathbf{u}, \mathbf{v}) &\leq \min_{\substack{a+b = N\\b \geq 1}} \frac{\Phi(4, a+1) + \Phi(3, b)}{2}\\
&=\frac{\Phi(4, N+2) - 1}{4}, \quad \textrm{by Lemma~\ref{lem:obs3},}\\
&= 1 + \frac{\Phi(4, N+2) - 5}{4}.
\end{align*}
\end{proof}
We would now like to extend this result to configurations which may share a peg, i.e. there is a peg which is occupied in both the starting and ending configuration. Surprisingly, this requires some more effort.
\begin{lemma}
\label{lem:thmext1}
Let $N \geq 1$ and $\mathbf{u}, \mathbf{v} \in \mathcal{H}(4, N)$ such that $\mathbf{u}^{-1}(\{2, 3\}) = \emptyset$ and $\mathbf{v}^{-1}(\{0, 3\}) = \emptyset$. If $\gamma : [T] \rightarrow \mathcal{H}(4, N)$ is any essential path between $\mathbf{u}$ and $\mathbf{v}$ then $\ell(\gamma) \geq \Psi(\mathbf{u}^{-1}(1))$.
\end{lemma}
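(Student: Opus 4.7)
Set $A := \mathbf{u}^{-1}(1)$; the inequality is trivial when $A = \emptyset$ since $\Psi(\emptyset) \leq 0$, so assume $A \neq \emptyset$ and write $a^* := \max A$ for the disk at the bottom of peg $1$ in $\mathbf{u}$.

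The key observation I would establish is that peg $1$ must become empty at some moment along $\gamma$. Because $\gamma$ is essential, the disk $a^*$ is forced to move at least once; in order for it to become topmost on peg $1$, every disk originally above it (namely the disks of $A \setminus \{a^*\}$) must first have been cleared away. Moreover, no disk larger than $a^*$ can slip below $a^*$ on peg $1$ before $a^*$ itself moves. Hence at the time $t_*$ of $a^*$'s first move, peg $1$ contains only $a^*$, and just after this move peg $1$ is empty.

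Let $y \in \{0,2,3\}$ be the peg that $a^*$ enters at time $t_*$. Because $a^*$ is being moved onto $y$, peg $y$ must be either empty or topped by a disk strictly greater than $a^*$; in either case $y$ contains no element of $[a^*]$ at time $t_*$. Consequently the restriction $\gamma(t_*)|_{[a^*]}$ has both peg $1$ and peg $y$ empty, and applying Theorem~\ref{thm:mainBousch} to the restricted sub-path $\gamma|_{[0,t_*]}|_{[a^*]}$ with peg $1$ as the designated empty peg would yield
\[
t_* \;\geq\; d\bigl(\mathbf{u}|_{[a^*]},\,\gamma(t_*)|_{[a^*]}\bigr) \;\geq\; \Psi\bigl(\mathbf{u}|_{[a^*]}^{-1}(1)\bigr) \;=\; \Psi\bigl(A \setminus \{a^*\}\bigr).
\]

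This accounts for the bulk of $\Psi(A)$, and the remaining task is to absorb the gap $\Psi(A) - \Psi(A \setminus \{a^*\})$, which I expect to be the main obstacle. The single move of $a^*$ at time $t_*$ contributes one additional unit, lifting the bound to $\Psi(A \setminus \{a^*\}) + 1$, but Lemma~\ref{lem:removal} only controls this gap from above by $2^{s-1}$, which generally exceeds $1$. I would attempt to close this by inducting on $|A|$ (or on $N$): deleting all moves of $a^*$ from $\gamma$ produces an essential path of length $\ell(\gamma) - M_{a^*}$ between $\mathbf{u}|_{[N] \setminus \{a^*\}}$ and $\mathbf{v}|_{[N] \setminus \{a^*\}}$, whose endpoints still satisfy the lemma's hypotheses for the smaller set $A \setminus \{a^*\}$. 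The delicate point is to show that $a^*$ is moved enough times that $M_{a^*}$ together with the contribution from the restricted sub-path matches $\Psi(A)$ exactly. This will likely depend on the position of $a^*$ in $\mathbf{v}$, in particular on whether $\mathbf{v}(a^*) = 1$ (forcing $a^*$ to return to peg $1$ and hence contributing additional moves beyond the first) or $\mathbf{v}(a^*) = 2$ (in which case the sub-path after $t_*$ itself carries extra structure that can be exploited via a second application of Theorem~\ref{thm:mainBousch}).
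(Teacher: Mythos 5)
Your opening reduction is sound and parallels the first half of the paper's argument: with $a^{*}=\max\mathbf{u}^{-1}(1)$, no larger disk can get underneath $a^{*}$ on peg $1$, so at the time $t_{*}$ of its first move peg $1$ is empty of disks in $[a^{*}]$, and Theorem~\ref{thm:mainBousch} applied to the restriction gives $t_{*}\geq\Psi(A\setminus\{a^{*}\})$. But the part you defer --- absorbing the gap $\Psi(A)-\Psi(A\setminus\{a^{*}\})$ --- is precisely the content of the lemma, and the route you sketch for it does not work. Deleting all moves of $a^{*}$ and inducting only buys you $M_{a^{*}}$ extra units, and nothing in the hypotheses forces $a^{*}$ to move more than once or twice, whereas Lemma~\ref{lem:removal} shows the gap can be as large as $2^{s-1}$. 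So the bound $\Psi(A\setminus\{a^{*}\})+M_{a^{*}}$ genuinely falls short, and your proposal as written is incomplete.

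The paper closes the gap by two different mechanisms, split according to the destination peg of the moving disk (after first reducing, by induction on $N$, to the case where the largest disk $N-1$ sits on peg $1$ in $\mathbf{u}$). If $N-1$ first moves to peg $0$ or $3$, no gap estimate is needed at all: one modifies the tail of $\gamma$ by the involution of pegs exchanging $1$ with the destination peg $a$ (valid because at that moment peg $1$ is empty and peg $a$ carries only the largest disk), producing a path of no greater length whose final configuration has peg $1$ empty, so a single application of Theorem~\ref{thm:mainBousch} yields $\Psi(\mathbf{u}^{-1}(1))$ directly. If $N-1$ moves to peg $2$, the tail $\gamma|_{[t_{1}+1,T-1]}$, restricted to $[N-1]$, runs from a configuration with pegs $1,2$ empty to one with pegs $0,3$ empty, so Lemma~\ref{lem:Two1} bounds it below by $1+\frac{\Phi(4,N+1)-5}{4}=(m+t)2^{m-2}-1$ with $N=\Delta_{4}m+t$; since $(m+t)2^{m-2}\geq 2^{m-1}$ and Lemma~\ref{lem:removal} with $s=m$ gives $\Psi(E)-\Psi(E\setminus\{N-1\})\leq 2^{m-1}$, the tail is long enough to pay for the removal gap. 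Without some substitute for these two steps --- in particular without invoking Lemma~\ref{lem:Two1} on the segment after the big disk's move --- your argument cannot be completed.
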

\begin{proof}
We prove the lemma by induction on $N$.

If $N = 1$ then $\Psi([1]) = 1$ and the claim trivially holds.

So assume $N \geq 2$. By induction we may also assume that $N-1 \in E := \mathbf{u}^{-1}(1)$. Let $t_1$ be the first time when the disk $N-1$ moves. Then $\gamma_{t_1}(N-1) = 1$. Set $a := \gamma_{t_1+1}(N-1)$.

\begin{case}
$a \neq 2$.
\end{case}

Then $a \in \{0, 3\}$. Let $\pi$ be the involution on $\{0, 1, 2, 3\}$ which exchanges elements $1$ and $a$. We modify $\gamma$ into a new path $\gamma'$ by letting $\gamma'|_{[0, t_1+1]} = \gamma|_{[0, t_1+1]}$ and setting for all $t > t_1+1$,
\begin{align*}
\gamma'_t(D) &= \pi \circ \gamma_t(D),\, D \in [N-1],\\
\gamma_t'(N-1) &= a.
\end{align*}
At time $t_1+1$, peg $1$ is empty and peg $a$ only contains the disk $N-1$. Hence all moves represented by $\gamma'$ are valid moves. However, $\gamma'$ may contain repeated states, so we may need to delete some in order to make it into a proper path. Note that in $\gamma'(T-1)$ pegs $1$ and $3-a$ are empty, as in $\gamma(T-1)$ pegs $a$ and $3-a$ were empty. Consequently by Theorem~\ref{thm:mainBousch},
\begin{equation*}
\ell(\gamma) \geq \ell(\gamma') \geq d(\mathbf{u}, \gamma'(T-1)) \geq \Psi(\mathbf{u}^{-1}(1)).
\end{equation*}

\begin{case}
$a = 2$.
\end{case}

By Theorem~\ref{thm:mainBousch} and the fact that the pegs $1$ and $2$ are empty in $\gamma(t_1)|_{[N-1]}$, we have
\begin{equation*}
d(\gamma(0), \gamma(t_1)) \geq d(\gamma(0)|_{[N-1]}, \gamma(t_1)|_{[N-1]}) \geq \Psi(E - \{N-1\}).
\end{equation*}
Also, by Lemma~\ref{lem:Two1} and the fact that pegs $1$ and $2$ are empty in $\gamma(t_1+1)|_{[N-1]}$, while pegs $0$ and $3$ are empty in $\gamma(T-1)|_{[N-1]}$, we have
\begin{equation*}
d(\gamma(t_1+1), \gamma(T-1)) \geq d(\gamma(t_1+1)|_{[N-1]}, \gamma(T-1)|_{[N-1]}) \geq 1 + \frac{\Phi(4, N+1)-5}{4}.
\end{equation*}
Hence adding the move of the disk $N-1$ gives
\begin{equation*}
\ell(\gamma) \geq \Psi(E - \{N-1\}) + 1 + \frac{\Phi(4, N+1)-1}{4}.
\end{equation*}
Write $N = \Delta_4 m + t, 0 \leq t \leq m$. By Lemma~\ref{lem:removal} applied to $E$ and $s := m$, we get 
\begin{equation*}
\Psi(E) - \Psi(E - \{N-1\}) \leq 2^{m-1}.
\end{equation*}
Also $\Phi(4, N+1) = 1 + (m+t)2^m$ and so $\frac{\Phi(4, N+1) - 1}{4} = (m+t)2^{m-2}$.

If $N=2$ then $m=t=1$ and so $m+t \geq 2$.

If $N \geq 3$ then $\nabla_4 N = m \geq 2$ and again $m+t \geq 2$.

Thus in any case $m+t \geq 2$ and $(m+t)2^{m-2} \geq 2^{m-1}$. Hence
\begin{equation*}
\ell(\gamma) \geq \Psi(E) - 2^{m-1} + 1 + 2^{m-1} > \Psi(E) = \Psi(\mathbf{u}^{-1}(1)).
\end{equation*}
\end{proof}

\begin{lemma}
\label{lem:thmext2}
Let $N \geq 1$ and $\mathbf{u}, \mathbf{v} \in \mathcal{H}(4, N)$ such that $\mathbf{u}^{-1}(\{2, 3\}) = \mathbf{v}^{-1}(\{2, 3\}) = \emptyset$. If $\gamma : [T] \rightarrow \mathcal{H}(4, N)$ is any essential path between $\mathbf{u}$ and $\mathbf{v}$ then $\ell(\gamma) \geq \Psi(\mathbf{u}^{-1}(1))$.
\end{lemma}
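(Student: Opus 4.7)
The plan is to proceed by induction on $N$, paralleling the structure of the proof of Lemma~\ref{lem:thmext1}. Set $E := \mathbf{u}^{-1}(1)$. The base case $N=1$ is trivial, since essentiality forces $\ell(\gamma) \geq 1 \geq \Psi(E)$. For the inductive step, if $N-1 \notin E$ I would restrict $\gamma$ to $[N-1]$ (collapsing the consecutive duplicates arising when disk $N-1$ moves). This restriction remains essential on $[N-1]$, and its endpoints $\mathbf{u}|_{[N-1]}$ and $\mathbf{v}|_{[N-1]}$ satisfy the hypothesis at $N-1$; the induction hypothesis gives $\ell(\gamma|_{[N-1]}) \geq \Psi(E)$, and since $\gamma$ contains at least one move of disk $N-1$ we conclude $\ell(\gamma) > \Psi(E)$.

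Suppose now $N-1 \in E$, and let $t_1$ be the first time disk $N-1$ is moved, so $\gamma_{t_1}(N-1)=1$ and $a := \gamma_{t_1+1}(N-1) \in \{0, 2, 3\}$. The case $a \in \{2, 3\}$ is handled by the swap trick from Case 1 of Lemma~\ref{lem:thmext1}: let $\pi$ be the transposition of pegs $1$ and $a$, set $\gamma'|_{[0, t_1+1]} := \gamma|_{[0, t_1+1]}$, and for $t > t_1+1$ define $\gamma'_t(D) := \pi(\gamma_t(D))$ for $D \in [N-1]$ and $\gamma'_t(N-1) := a$. The verification that $\gamma'$ (after collapsing duplicates) is a valid path with $\ell(\gamma') \leq \ell(\gamma)$ is identical to that in Lemma~\ref{lem:thmext1}, using that at time $t_1+1$ peg $1$ is empty, peg $a$ contains only disk $N-1$, and the other $[N-1]$-disks lie on the two pegs in $\{0, 1, 2, 3\} \setminus \{1, a\}$, which $\pi$ fixes pointwise. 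Since $\mathbf{v}^{-1}(\{2, 3\}) = \emptyset$, applying $\pi$ to $\mathbf{v}$ and pinning $N-1$ at $a$ yields a configuration $\gamma'(T-1)$ with all disks on pegs $\{0, a\}$; in particular peg $1$ and the remaining element of $\{2, 3\}$ are empty. Theorem~\ref{thm:mainBousch}, applied with peg $1$ as the distinguished empty peg, then gives $\ell(\gamma) \geq \ell(\gamma') \geq d(\mathbf{u}, \gamma'(T-1)) \geq \Psi(\mathbf{u}^{-1}(1)) = \Psi(E)$.

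The remaining case is $a = 0$, in which case disks $[N-1]$ must sit on pegs $\{2, 3\}$ at times $t_1$ and $t_1+1$, since peg $0$ is empty to receive disk $N-1$ and peg $1$ then contains only disk $N-1$. Two applications of Lemma~\ref{lem:Two1}, one for $\gamma|_{[0, t_1]}|_{[N-1]}$ (from disks on $\{0, 1\}$ to disks on $\{2, 3\}$) and one for the reversed second half $\gamma|_{[t_1+1, T-1]}|_{[N-1]}$, each give a lower bound of $1 + (\Phi(4, N+1)-5)/4$; combined with the single move of $N-1$ at time $t_1$, this yields $\ell(\gamma) \geq (\Phi(4, N+1)+1)/2 = \Psi([N]) + 1 > \Psi(E)$, where the last two steps use Lemma~\ref{lem:obs3} and the monotonicity of $\Psi$ under inclusion. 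The main subtlety to address is that $\mathbf{v}$ here has \emph{two} potentially occupied pegs (unlike in Lemma~\ref{lem:thmext1}); the swap trick is precisely what accommodates this extra freedom in the ``away'' subcase $a \in \{2, 3\}$ by turning $\mathbf{v}$ into a configuration with an unambiguously empty peg $1$, while the ``toward'' subcase $a = 0$ is handled with room to spare by the Lemma~\ref{lem:Two1} estimate.
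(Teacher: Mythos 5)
Your proof is correct and follows essentially the route the paper intends for this omitted proof, namely the adaptation of the argument for Lemma~\ref{lem:thmext1}: induction on $N$, reduction to $N-1 \in \mathbf{u}^{-1}(1)$ by restricting to $[N-1]$, the peg-swapping trick when disk $N-1$ first moves to a peg of $\{2,3\}$, and a direct estimate in the remaining case. Your handling of the case $a=0$ (two applications of Lemma~\ref{lem:Two1}, giving $\Psi([N])+1$ and then monotonicity of $\Psi$) is a slight, and in fact cleaner, variant of the paper's corresponding Case~2 of Lemma~\ref{lem:thmext1}, which instead combines one application of Lemma~\ref{lem:Two1} with Theorem~\ref{thm:mainBousch} and Lemma~\ref{lem:removal}; both yield the required bound.
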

The proof is nearly identical to that of Lemma~\ref{lem:thmext1}, and so we omit it.
\begin{lemma}
\label{lem:obs2}
If $N \geq 1$ then $\frac{\Phi(4, N+1)-1}{2} \geq \frac{\Phi(4, N+2) - 1}{4}$.
\end{lemma}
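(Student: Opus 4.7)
The inequality is equivalent, after clearing denominators, to
$$\Phi(4, N+2) \leq 2\Phi(4, N+1) - 1,$$
so my plan is to verify this form directly from the explicit formula \eqref{eq:Phi4}. Write $N = \Delta_4 m + t$ with $0 \leq t \leq m$. Since $N \geq 1$, we cannot have $m=0$ (which would force $t = N \geq 1 > m$), so $m \geq 1$. Then $\Phi(4, N+1) = 1 + (m+t)2^m$, and the target inequality becomes
$$\Phi(4, N+2) \leq 1 + 2(m+t)2^m.$$

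The remaining work is to evaluate $\Phi(4, N+2)$ by determining the decomposition of $N+1$. I would split into two cases. If $t < m$, then $N+1 = \Delta_4 m + (t+1)$ with $t+1 \leq m$, so $\Phi(4, N+2) = 1 + (m+t+1)2^m$, and the inequality reduces to $m+t+1 \leq 2(m+t)$, i.e.\ $m+t \geq 1$, which holds since $m \geq 1$. If $t = m$, then by \eqref{eq:delta} (with $p = 4$, $n = m+1$) we get $\Delta_4(m+1) = \Delta_4 m + (m+1)$, hence $N+1 = \Delta_4(m+1)$, so $\Phi(4, N+2) = 1 + (m+1)2^{m+1}$, and the inequality reduces to $(m+1)2^{m+1} \leq 2 \cdot 2m \cdot 2^m = 2m \cdot 2^{m+1}$, i.e.\ $m \geq 1$, which again holds.

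The computation is entirely mechanical once one has the explicit formula \eqref{eq:Phi4}, so there is no real obstacle here; the only place one must be a bit careful is noticing that when $t = m$ the parameter $m$ ``carries'' and the decomposition of $N+1$ jumps to $(m+1, 0)$, which is exactly what \eqref{eq:delta} records. An alternative would be to argue from the Frame--Stewart recurrence \eqref{eq:FrameStewart} directly, but the naive split $\ell = N+1$ gives only $\Phi(4, N+2) \leq 2\Phi(4, N+1) + 1$, which is off by $2$; the explicit formula is therefore the cleaner route.
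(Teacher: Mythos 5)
Your proof is correct and takes essentially the same route as the paper: both clear denominators, write $N = \Delta_4 m + t$ with $0 \le t \le m$, and split into the cases $t < m$ and $t = m$ using the explicit formula \eqref{eq:Phi4}. Your explicit observation that $N \ge 1$ forces $m \ge 1$ (which handles the $t=m$ case) is a small point the paper leaves implicit, but the argument is the same.
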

\begin{proof}
We show the equivalent statement $2\Phi(4, N+1) - 2 \geq \Phi(4, N+2) - 1$.

Write $N = \Delta_4 m +t, 0 \leq t \leq m$. Then $2\Phi(4, N+1) - 2 = (m+t)2^{m+1}$.

If $t \leq m-1$ then
\begin{equation*}
\Phi(4, N+2) - 1 = (m+t+1)2^m \leq 2(m+t)2^m,
\end{equation*}
as $N \geq 1$ implies $m+t \geq 1$.

If $t = m$ then
\begin{equation*}
\Phi(4, N+2) - 1 = (m+1)2^{m+1} \leq (m+t)2^{m+1}.
\end{equation*}
Thus in both cases $\Phi(4, N+2) - 1$ is at most $2\Phi(4, N+1) - 2$, as desired.
\end{proof}
We are now ready to prove the counterpart to Lemma~\ref{lem:Two1}.
\begin{lemma}
\label{lem:Two2}
Let $N \geq 1$ and $\mathbf{u}, \mathbf{v} \in \mathcal{H}(4, N)$ such that $\mathbf{u}^{-1}(\{2, 3\}) = \emptyset$ and $\mathbf{v}^{-1}(\{0, 3\}) = \emptyset$. If $\gamma : [T] \rightarrow \mathcal{H}(4, N)$ is any essential path between $\mathbf{u}$ and $\mathbf{v}$ then $$\ell(\gamma) \geq 1 + \frac{\Phi(4, N+2) - 5}{4}.$$
\end{lemma}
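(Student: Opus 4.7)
The plan is to follow the template of the proof of Lemma~\ref{lem:Two1}. Since $\gamma$ is essential the disk $N-1$ is moved at least once; let $t_1$ be the first time it moves, with source peg $p_0 := \gamma_{t_1}(N-1) \in \{0,1\}$ and destination peg $p_1 := \gamma_{t_1+1}(N-1)$. At times $t_1$ and $t_1+1$ the smaller disks occupy exactly the two pegs of $\{0,1,2,3\}\setminus\{p_0,p_1\}$, so they split as $A \dot\cup B = [N-1]$. I hope to extract, via Theorem~\ref{thm:mainBousch}, the bounds $d(\gamma(0),\gamma(t_1)) \geq \Psi(A)$ and $d(\gamma(t_1+1),\gamma(T-1)) \geq \Psi(B)$ on the two halves of $\gamma$; combined with the single move of $N-1$ at time $t_1$ and Lemma~\ref{lem:union}, this would yield $\ell(\gamma) \geq \Psi(A) + 1 + \Psi(B) \geq 1 + \frac{\Phi(4,N+2)-5}{4}$, as required.

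A short case analysis on $(p_0,p_1)$ shows that the direct strategy succeeds whenever $p_0 = 1$ (so one of the smaller-disk pegs is $0$, which lies in $\mathbf{v}$'s empty pegs $\{0,3\}$) or whenever $p_1 \in \{0,1\}$ (so both smaller-disk pegs lie in $\mathbf{u}$'s empty pegs $\{2,3\}$): in each such sub-case, Theorem~\ref{thm:mainBousch} applied to $(\mathbf{u},\gamma(t_1))$ using $\mathbf{u}$'s empty pegs and to $(\gamma(t_1+1),\mathbf{v})$ using $\mathbf{v}$'s empty pegs produces $\Psi(A)$ and $\Psi(B)$ on opposite sides. The genuinely difficult cases are $p_0 = 0$ with $p_1 \in \{2,3\}$: the smaller disks at $t_1$ then occupy peg $1$ (nonempty in both $\mathbf{u}$ and $\mathbf{v}$) together with one of $\{2,3\}$, so the $\mathbf{v}$-side bound only detects the disks on the common empty peg $3$ and the two sides produce the same set $B$ rather than complementary sets.

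For these hard cases I would appeal to Lemma~\ref{lem:thmext1}, whose hypotheses match those of Lemma~\ref{lem:Two2}: applied to $\gamma$ directly it gives $\ell(\gamma) \geq \Psi(\mathbf{u}^{-1}(1))$, and applied to the reversed path $\gamma^*$ after the peg relabeling $0 \leftrightarrow 2$ it gives $\ell(\gamma) \geq \Psi(\mathbf{v}^{-1}(1))$. These are supplemented by Theorem~\ref{thm:mainBousch} applied directly between $\mathbf{u}$ and $\mathbf{v}$ (yielding $\ell(\gamma) \geq \Psi(\mathbf{u}^{-1}(0))$ from $\mathbf{v}$'s empty peg $0$, and $\ell(\gamma) \geq \Psi(\mathbf{v}^{-1}(2))$ from $\mathbf{u}$'s empty peg $2$); the quantitative balancing is then supposed to be driven by Lemma~\ref{lem:union} applied to the partition $\mathbf{u}^{-1}(0) \dot\cup \mathbf{u}^{-1}(1) = [N]$ together with the comparison Lemma~\ref{lem:obs2}. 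The main obstacle is exactly this hard case: because the auxiliary lemmas each bound $\ell(\gamma)$ as a whole rather than two disjoint sub-pieces, the ``max'' of the bounds is in general strictly weaker than the sum one needs; to close the argument one likely has to either choose the splitting time more cleverly (for example the last time $N-1$ moves, or the first time $N-1$ lands on peg $2$) so that complementary $\Psi$-bounds reappear on the two halves, or run a small induction on $N$ in the spirit of the proof of Lemma~\ref{lem:thmext1} itself, possibly invoking a path-modification via a peg-swap involution.
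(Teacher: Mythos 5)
Your setup is sound and you have correctly isolated the genuine difficulty: when the first move of disk $N-1$ goes from peg $0$ to a peg in $\{2,3\}$, the smaller disks sit on peg $1$ and on one peg of $\{2,3\}$, so the two applications of Theorem~\ref{thm:mainBousch} detect the same set (or, when $p_1=3$, the $\mathbf{v}$-side detects nothing) instead of complementary sets. But the proposal does not close this case. The fallback bounds you list --- $\ell(\gamma)\geq\Psi(\mathbf{u}^{-1}(1))$ from Lemma~\ref{lem:thmext1}, $\ell(\gamma)\geq\Psi(\mathbf{v}^{-1}(1))$ from the reversed path, $\ell(\gamma)\geq\Psi(\mathbf{u}^{-1}(0))$ and $\ell(\gamma)\geq\Psi(\mathbf{v}^{-1}(2))$ from Theorem~\ref{thm:mainBousch} --- all bound the \emph{whole} path, so the best one can extract is their maximum, which via Lemma~\ref{lem:union} applied to $\mathbf{u}^{-1}(0)\,\dot\cup\,\mathbf{u}^{-1}(1)=[N]$ is only about $\frac{\Phi(4,N+3)-5}{8}$; since $\Phi(4,N+3)\leq 2\Phi(4,N+2)-1$, this falls short of the target $1+\frac{\Phi(4,N+2)-5}{4}$ (consider $\mathbf{u}$ with the disks split evenly between pegs $0$ and $1$). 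Your closing remarks ("choose the splitting time more cleverly", "run a small induction") name the missing ingredient rather than supply it, so the proof has a genuine gap in exactly the case you flag as hard.

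For the record, the paper does not split at the first move of $N-1$ at all. It first handles $\mathbf{v}^{-1}(1)=\emptyset$ directly via $\Psi([N])$ and Lemma~\ref{lem:obs2}; otherwise it takes $D$ to be the largest disk on peg $1$ in $\mathbf{v}$ and splits at the \emph{last} time $D$ arrives on peg $1$ (when $D=N-1$), or, when $D<N-1$, at the last crossing of $N-1$ from $\{0,1\}$ to $\{2,3\}$ combined with the last move of $D$. The point of these ``last time'' choices is that at the splitting moment the disks of $[N-1]$ occupy one peg of $\{2,3\}$ (visible from $\mathbf{u}$) and one peg of $\{0,3\}$ (visible from $\mathbf{v}$), or else a peg all of whose disks are forced to move again --- and in that situation the relevant sub-path restricted to $[N-1]$ is \emph{essential}, so Lemmas~\ref{lem:thmext1} and~\ref{lem:thmext2} apply to the sub-path rather than to the whole path and supply the second, complementary $\Psi$-term needed for Lemma~\ref{lem:union}. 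That is precisely the mechanism your sketch is missing.
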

\begin{proof}
If $\mathbf{v}^{-1}(1) = \emptyset$ then all disks are on peg $2$ in $\mathbf{v}$. But pegs $2$ and $3$ are empty in $\mathbf{u}$, so by Theorem~\ref{thm:mainBousch} and Lemma~\ref{lem:obs3},
\begin{equation*}
\ell(\gamma) \geq d(\mathbf{v}, \mathbf{u}) \geq \Psi(\mathbf{v}^{-1}(2)) = \Psi([N]) = \frac{\Phi(4, N+1) - 1}{2}.
\end{equation*}
By Lemma~\ref{lem:obs2}, this is at least $1 + \frac{\Phi(4, N+2) - 5}{4}$, proving the claim in this case.

So we may assume that $\mathbf{v}^{-1}(1) \neq \emptyset$. Let $D$ be the largest disk on peg $1$ in $\mathbf{v}$.

\setcounter{case}{0}
\begin{case}
$D = N-1$.
\end{case}

Let $t_1$ be the last time when $D$ is not on peg $1$. Then $\gamma_{t_1+1}(D) = 1$. Set $a := \gamma_{t_1}(D)$. We define $b$ and $c$ as follows.
\begin{center}
\begin{tabular}{c | c c}
a & b & c\\
\hline
0 & 2 & 3\\
2 & 3 & 0\\
3 & 2 & 0
\end{tabular}
\end{center}

Then all disks in $[N-1]$ are on pegs $b$ and $c$ at time $t_1$. Set
\begin{align*}
A &:= \{z \in [N-1] : \gamma_{t_1}(z) = b\}\\
B &:= \{z \in [N-1] : \gamma_{t_1}(z) = c\}
\end{align*}

By Theorem~\ref{thm:mainBousch}, $d(\gamma(t_1), \mathbf{u}) \geq \Psi(A)$, as $b \in \{2, 3\}$ and pegs $2$ and $3$ are empty in $\mathbf{u}$.

Also, $d(\gamma(t_1+1), \mathbf{v}) \geq \Psi(B)$, as $c \in \{0, 3\}$ and pegs $0$ and $3$ are empty in $\mathbf{v}$.

Consequently by Lemma~\ref{lem:union} and the fact that $A \dot\cup B = [N-1]$,

$$\ell(\gamma) \geq d(\gamma(t_1), \mathbf{u}) + 1 + d(\gamma(t_1+1), \mathbf{v}) \geq 1 + \frac{\Phi(4, N+2) - 5}{4},$$
as desired.

\begin{case}
$D < N-1$.
\end{case}

Then $\mathbf{v}(N-1) = 2$. Let $t_1$ be the last time when the disk $N-1$ moves from pegs $\{0, 1\}$ to pegs $\{2, 3\}$. Let $t_2$ be the last time when $D$ is not on peg $1$.

Let $a := \gamma_{t_1+1}(N-1)$ and set
\begin{equation*}
b := \left\{
\begin{array}{cc}
2, & \textrm{if $a = 3$},\\
3, & \textrm{if $a = 2$}.
\end{array}
\right.
\end{equation*}
Define
\begin{equation*}
c := \left\{
\begin{array}{cc}
0, & \textrm{if $\gamma_{t_1}(N-1) = 1$},\\
1, & \textrm{if $\gamma_{t_1}(N-1) = 0$}.
\end{array}
\right.
\end{equation*}
Further set
\begin{align*}
A &:= \{z \in [N-1] : \gamma_{t_1}(z) = b\}\\
B &:= \{z \in [N-1] : \gamma_{t_1}(z) = c\}
\end{align*}
Note that $A \dot\cup B = [N-1]$. We now consider two subcases.
\begin{subcase}
$t_2 > t_1$.
\end{subcase}

Then
\begin{equation*}
d(\gamma(t_1), \mathbf{u}) \geq d(\gamma(t_1)|_{[N-1]}, \mathbf{u}|_{[N-1]}) \geq \Psi(A),
\end{equation*}
as $b \in \{2, 3\}$ and pegs $2$ and $3$ are empty in $\mathbf{u}$.

We will now show that $\ell(\gamma|_{[t_1+1, T-1]}) \geq \Psi(B)$.
 
If $c = 0$, then by Theorem~\ref{thm:mainBousch},
\begin{equation*}
\ell(\gamma|_{[t_1+1, T-1]}) \geq d(\gamma(t_1+1), \mathbf{v}) \geq d(\gamma(t_1+1)|_{[N-1]}, \mathbf{v}|_{[N-1]}) \geq \Psi(B),
\end{equation*}
as $c \in \{0, 3\}$ and pegs $0$ and $3$ are empty in $\mathbf{v}$.

If $c = 1$, then we claim that $\gamma|_{[t_1+1, T-1]}$ is an essential path when restricted to the moves of the first $N-1$ disks. Indeed, the disks on peg $1$ at time $t_1+1$ will all have to move, to make room for the disk $D$. The disks on peg $b$ at time $t_1+1$ will have to move, as either $b=3$ and $\mathbf{v}^{-1}(3) = \emptyset$, or $b=2$, and $N-1$ is not yet on peg $2$. Hence by Lemma~\ref{lem:thmext1}, if $b=3$, and Lemma~\ref{lem:thmext2}, if $b=2$,
\begin{equation*}
\ell(\gamma|_{[t_1+1, T-1]}) \geq \Psi(B).
\end{equation*}
So
\begin{equation*}
\ell(\gamma) \geq \Psi(A) + 1 + \Psi(B) \geq 1 + \frac{\Phi(4, N+2)-5}{4},
\end{equation*}
as desired.

\begin{subcase}
$t_2 < t_1$.
\end{subcase}

Then $c = 1$, otherwise $\gamma_{t_1}(N-1) = 1$ and $D$ is not on peg $1$ at time $t_1$. We claim that $\gamma|_{[0, t_1]}$ is an essential path when restricted to the moves of the first $N-1$ disks. Indeed, the disks on peg $1$ all moved at least once in the time interval $[0, t_1]$, as $D$ is already in final position at time $t_1$, and the disks on peg $b$ all moved, as $b \in \{2, 3\}$ and $\mathbf{u}^{-1}(\{2, 3\}) = \emptyset$. Hence by Lemma~\ref{lem:thmext1},
\begin{equation*}
\ell(\gamma|_{[0, t_1]}) \geq \Psi(B).
\end{equation*}
We will now show that
\begin{equation*}
\ell(\gamma|_{[t_1+1, T-1]}) \geq \Psi(A).
\end{equation*}
If $b = 3$ then the disk $N-1$ moves from peg $0$ to peg $2$ at time $t_1$. By Theorem~\ref{thm:mainBousch} and the fact that pegs $0$ and $3$ are empty in $\mathbf{v}$,
\begin{equation*}
\ell(\gamma|_{[t_1+1, T-1]}) \geq d(\gamma(t_1+1), \gamma(T-1)) \geq \Psi(A).
\end{equation*}
If $b=2$, let $t_3 > t_1$ be the last time when $\gamma_{t_3}(N-1) \neq 2$. Then at time $t_3$, peg $b = 2$ and some other peg do not contain any disks smaller than $N-1$. So by Theorem~\ref{thm:mainBousch},
\begin{equation*}
\ell(\gamma|_{[t_1+1, T-1]}) \geq d(\gamma(t_1+1)|_{[N-1]}, \gamma(t_3)|_{[N-1]}) \geq \Psi(A).
\end{equation*}
Thus in any case,
\begin{equation*}
\ell(\gamma) \geq \Psi(B)+1+\Psi(A) \geq 1 + \frac{\Phi(4, N+2) - 5}{4},
\end{equation*}
as claimed.
\end{proof}
\begin{proof}[Proof of Theorem~\ref{thm:main1}]
The statement is clearly true if $N \leq 3$, as $\Phi(4, 3) = 5$. So assume $N \geq 4$. We will first show the following inequality:
\begin{equation}
\label{eq:ToProve}
\Gamma(4, N) \geq 3 + \frac{\Phi(4, N)-5}{4}.
\end{equation}
Let $\gamma:[T] \rightarrow \mathcal{H}(4, N)$ be a shortest essential path. Let $t_1$ be any time when the disk $N-1$ moves. We may assume without lack of generality that $\gamma_{t_1}(N-1) = 0$ and $\gamma_{t_1+1}(N-1) = 1$. We shall further assume that $\gamma_{t_1+1}(N-2) = 2$.

We now choose $t_2 \in [T]$ such that the disk $N-2$ moves at time $t_2$, and the difference $|t_2-t_1|$ is minimal. Clearly $t_2$ exists, although there may be two distinct choices, if the disk $N-2$ moves before and after time $t_1$. If there are two possibilities for $t_2$, we choose one arbitrarily. Then by definition of $t_2$, the disk $N-2$ does not move in the time interval $[\min\{t_1, t_2+1\}, \max\{t_1+1, t_2\}]$.

Note that we can always replace $\gamma$ with $\gamma^*$, $t_1$ with $t_1' := T-t_1-2$ and $t_2$ with $t_2' := T-t_2-2$. Then $\gamma^*$ is still essential, $N-1$ moves at time $t_1'$, $N-2$ moves at time $t_2'$, and the difference $|t_1' - t_2'| = |t_1-t_2|$ is still minimal.

First suppose peg $3$ is empty at time $t_1+1$. By replacing $\gamma$ with $\gamma^*$ if necessary, we may assume that $t_2 > t_1$. At time $t_1+1$, all disks in $[N-1]$ are on peg $2$, while at time $t_2$, pegs $2$ and $\gamma_{t_2+1}(N-2)$ do not contain any disks from $[N-2]$. Consequently, by restricting to the first $N-2$ disks and applying Theorem~\ref{thm:mainBousch}, we get $\ell(\gamma|_{[t_1+1, t_2]}) \geq \frac{\Phi(4, N-1)-1}{2}$. Adding the $2$ moves of the disks $N-1$ and $N-2$ and using Lemma~\ref{lem:obs2}, we get
\begin{equation*}
\ell(\gamma) \geq 2 + \frac{\Phi(4, N-1)-1}{2} \geq 3 + \frac{\Phi(4, N)-5}{4}.
\end{equation*}
  
Therefore we may assume that peg $3$ is not empty at time $t_1+1$. Let $D$ be the largest disk on peg $3$ and $t_3$ any time when the disk $D$ moves.

\setcounter{case}{0}
\begin{case}
$t_2 < t_1$ but $t_3 > t_1$.
\end{case}

Let
\begin{align*}
A &:= \{z \in [N-2] :\gamma_{t_1+1}(z) = 2 \}\\
B &:= \{z \in [D] : \gamma_{t_1+1}(z) = 3 \}
\end{align*}

Let us look at the path $\gamma|_{[t_2+1, t_1]}$. If we go backwards from time $t_1$ to time $t_2+1$, all disks on peg $2$, except $N-2$ (in other words, the disks in $A$), will have to move to make room for the move of the disk $N-2$ at time $t_2+1$. Hence by restricting to the first $N-2$ disks and then applying Theorem~\ref{thm:mainBousch}, we get $\ell(\gamma|_{[t_2+1, t_1]}) \geq \Psi(A)$. Similarly, by restricting to the disks in $[D]$, we get $\ell(\gamma|_{[t_1+1, t_3]}) \geq \Psi(B)$. Adding the $3$ moves of the disks $N-1, N-2$ and $D$ gives
\begin{equation*}
\ell(\gamma) \geq 3 + \Psi(A) + \Psi(B) \geq 3 + \frac{\Phi(4, N)-5}{4}.
\end{equation*}

\begin{case}
$t_2 > t_1$ but $t_3 < t_1$.
\end{case}
This case follows from the previous one by replacing $\gamma$ with $\gamma^*$.

\begin{case}
$t_2, t_3 > t_1$ or $t_2, t_3 < t_1$.
\end{case}

By replacing $\gamma$ with $\gamma^*$ if necessary, we may suppose that $t_2, t_3 > t_1$. As the disk $N-2$ does not move in the time interval $[t_1, t_2]$, we have $\gamma_{t_2}(N-2) = 2$.

We shall consider two further subcases.

\begin{subcase}
$t_3 < t_2$.
\end{subcase}
Then $\gamma|_{[t_1+1, t_2]}$ is an essential path when restricted to the moves of the first $N-2$ disks. Indeed, all disks on peg $3$ must move, because $D$ moves, and all disks on peg $2$ move, to make room for the move of the disk $N-2$ at time $t_2$. Hence by Lemmas~\ref{lem:Two1} and~\ref{lem:Two2} applied to $\mathbf{u}:= \gamma(t_1+1)|_{[N-2]}$ and $\mathbf{v} := \gamma(t_2)|_{[N-2]}$, we get that
$$\ell(\gamma|_{[t_1+1, t_2]}) \geq 1+\frac{\Phi(4, N)-5}{4}.$$
Adding the further $2$ moves of the disks $N-1$ and $N-2$ gives the result.

\begin{subcase}
$t_3 > t_2$.
\end{subcase}

If $\gamma_{t_2+1}(N-2) = 3$ then the disk $D$ moves at least once in the time interval $[t_1+1, t_2]$ and we may apply the previous subcase.

So assume without lack of generality that $\gamma_{t_2+1}(N-2) = 0$ and $\gamma_{t_2+1}(D) = 3$. Set
\begin{align*}
A &:= \{z \in [N-2] : \gamma_{t_2+1}(z) = 1\}\\
B &:= \{z \in [D] : \gamma_{t_2+1}(z) = 3\}
\end{align*}
By Theorem~\ref{thm:mainBousch} and the fact that pegs $0$ and $1$ are empty in $\gamma(t_1+1)|_{[N-2]}$, we have $\ell(\gamma|_{[t_1+1, t_2]}) \geq \Psi(A)$. Similarly, by restricting to the moves of the disks in $[D]$, we see that $\ell(\gamma_{[t_2+1, t_3]}) \geq \Psi(B)$. Therefore by adding the $3$ moves of the disks $N-1, N-2$ and $D$ we get
\begin{equation*}
\ell(\gamma) \geq 3 + \Psi(A) + \Psi(B) \geq 3 + \frac{\Phi(4, N)-5}{4}.
\end{equation*}

This completes the proof of \eqref{eq:ToProve}.

We will now show that the bound can be achieved. Let $a, b \geq 0$ such that $a+b = N-3$. Consider a configuration $\mathbf{u}_{a, b}$ with the disk $N-1$ on peg $2$, the disk $N-2$ on peg $1$ and the disk $N-3$ on peg $0$. We put disks $N-3-b, N-2-b, \ldots, N-4$ on peg $0$, and distribute the remaining $a$ disks on pegs $0$ and $1$ in such a way that they form a midpoint configuration on $4$ pegs.

Then we can first move the disk $N-1$ to peg $3$, followed by the disks $0, 1, \ldots, a-1$ to the same peg in at most $\frac{\Phi(4, a+1)-1}{2}$ moves.

Afterwards we move the disk $N-2$ to peg $2$ (the peg is now free, and there are no more disks on top of the disk $N-2$). We further move the disks $N-3-b, N-2-b, \ldots, N-4$ to peg $2$ in $2^b - 1$ moves.

Finally, we move the disk $N-3$ to peg $1$.

Let $\mathbf{v}_{a, b}$ be the resulting configuration. We have just constructed an essential path $\gamma_{a, b}$ between $\mathbf{u}_{a, b}$ and $\mathbf{v}_{a, b}$ with 
\begin{equation*}
\ell(\gamma_{a, b}) \leq 2 + \frac{\Phi(4, a+1)-1}{2} + 2^b = 2 + \frac{\Phi(4, a+1)+\Phi(3, b+1)}{2}.
\end{equation*}
Minimizing over all choices of $a$ and $b$ yields an essential path $\gamma$ of length at most
\begin{align*}
2 + \min_{\substack{a+b = N-3\\a, b \geq 0}} \frac{\Phi(4, a+1)+\Phi(3, b+1)}{2}
&= 2 + \min_{\substack{a+b = N-1\\a, b \geq 1}} \frac{\Phi(4, a)+\Phi(3, b)}{2}\\
&= 2 + \frac{\Phi(4, N) - 1}{4}, \quad \textrm{by Lemma~\ref{lem:obs3}},\\
&= 3 + \frac{\Phi(4, N) - 5}{4}.
\end{align*}
\end{proof}

A similar argument as above shows that $\Gamma(p, N) \leq p-1 + \frac{\Phi(p, N)-(2(p-2)+1)}{4}$, for all $p \geq 3$ and $N \geq p-1$.

\section{\normalsize Proof of Theorem~\ref{thm:main2}}

Let us recall the statement of Theorem~\ref{thm:main2}. Given $p \geq 4$ and $N \geq 1$, we write
$$N-1 = \Delta_p m + \Delta_{p-1}t + r, \quad t \leq m,\,0 \leq r < \Delta_{p-2}(t+1).$$
Theorem~\ref{thm:main2} then states that $H(p, N) \geq (m+t)2^{m-2(p-2)}$.

Note that this decomposition of $N-1$ exists: first choose $m \geq 0$ maximal with $\Delta_p m \leq N-1$ and then $t \geq 0$ maximal with $\Delta_{p-1}t \leq N-1 - \Delta_p m$. Let $r$ be the remainder. Then $t \leq m$, as $\Delta_p m + \Delta_{p-1}(m+1) = \Delta_p(m+1)$. One can easily show that this decomposition is unique.
\begin{proof}[Proof of Theorem~\ref{thm:main2}]
We prove the stronger statement 
\begin{equation*}
\Gamma(p, N) \geq (m+t)2^{m-2(p-2)}
\end{equation*}
by induction, first after $p$, and then after $N$. The theorem then follows from the fact that $H(p, N) \geq \Gamma(p, N)$.

If $p = 4$, the claim reduces to the inequality
\begin{equation*}
\Gamma(4, N) \geq (m+t)2^{m-4}.
\end{equation*}
But by Theorem~\ref{thm:main1},
\begin{equation*}
\Gamma(4, N) = \left\{
\begin{array}{ll}
N, & \textrm{if $N \leq 2$},\\
2+(m+t)2^{m-2}, &\textrm{otherwise}.
\end{array}
\right.
\end{equation*}
As $0 = \Delta_4 0, 1 = \Delta_4 1$ and $2 = \Delta_4 1 + 1$, the claim holds in this case. So assume $p \geq 5$.

For $m \leq p-2$ the claim reduces to the inequality
\begin{equation*}
\Gamma(p, N) \geq \frac{m+t}{2^{2(p-2)-m}}.
\end{equation*}
But $\frac{m+t}{2^{2(p-2)-m}} \leq \frac{2(p-2)}{2^{p-2}} \leq 1$, as $2^x \geq 2x$ holds for all $x \geq 1$.
Hence in this case the claim is trivially true. So suppose $m \geq p-1$. Then we have two cases.

\setcounter{case}{0}
\begin{case}
$t+1 \leq m-1$.
\end{case}
Then $\Gamma(p, \Delta_p m + \Delta_{p-1} t + r + 1) \geq \Gamma(p, \Delta_p m + \Delta_{p-1} t +1)$.
But
\begin{equation*}
\Delta_p m + \Delta_{p-1} t + 1 = \Delta_p(m-1) + \Delta_{p-1}(m-1)+\Delta_{p-2}m + \Delta_{p-1}(t+1) - \Delta_{p-2}(t+1) + 1.
\end{equation*}
Furthermore $\Delta_{p-2}m \geq \Delta_{p-2}(m-1)+1$, with equality if $p = 5$. Also by our assumption $t+1 \leq m-1$ we have $\Delta_{p-2}(m-1) - \Delta_{p-2}(t+1) \geq 0$. Hence
\begin{equation*}
\Delta_p m + \Delta_{p-1}t+ 1 \geq \Delta_p(m-1) + \Delta_{p-1}(m-1) + \Delta_{p-1}(t+1) + 2.
\end{equation*}
So by Lemma~\ref{lem:recursive},
\begin{align*}
\Gamma(p, \Delta_p m + \Delta_{p-1} t + 1) \geq 2\min\{
&\Gamma(p, \Delta_p(m-1) + \Delta_{p-1}(t+1)+1),\\
&\Gamma(p-1, \Delta_{p-1}(m-1)+1)
\}.
\end{align*}
By induction,
\begin{equation*}
\Gamma(p, \Delta_p(m-1) + \Delta_{p-1}(t+1) + 1) \geq (m-1+t+1)2^{m-1-2(p-2)},
\end{equation*}
and
\begin{align*}
\Gamma(p-1, \Delta_{p-1}(m-1)+1)
&\geq (m-1)2^{m-1-2(p-1-2)}\\
&\geq 4(m-1)2^{m-1-2(p-2)}.
\end{align*}
As $4(m-1) = m + 3m-4 \geq m+t$, it follows that
\begin{equation*}
\Gamma(p, \Delta_p m + \Delta_{p-1} t + r + 1) \geq (m+t)2^{m-2(p-2)}.
\end{equation*}

\begin{case}
$m \geq t \geq m-1$.
\end{case}

Now
$$\Delta_p m +\Delta_{p-1} t +1 \geq \Delta_p m + 1 + \Delta_{p-1}(t-1) + \Delta_{p-2}(t-1) + 1.$$
Hence by Lemma~\ref{lem:recursive},
\begin{align*}
\Gamma(p, \Delta_p m + \Delta_{p-1} t + r + 1) \geq 2\min\{
&\Gamma(p, \Delta_p m + 1),\\
&\Gamma(p-1, \Delta_{p-1}(t-1) + \Delta_{p-2}(t-1) + 1)\}. 
\end{align*}
By induction,
\begin{equation*}
\Gamma(p, \Delta_p m + 1) \geq m2^{m-2(p-2)} = 2m2^{m-1-2(p-2)}\geq (m+t)2^{m-1-2(p-2)}.
\end{equation*}
Also,
\begin{equation*}
\Gamma(p-1, \Delta_{p-1}(t-1) + \Delta_{p-2}(t-1) + 1) \geq 2(t-1)2^{t-1-2(p-2)+2} \geq 4(t-1)2^{m-1-2(p-2)}.
\end{equation*}
As $4t-4 = t + 3t-4 \geq t + 3m-7 = m+t+2m-7$ and $m \geq p-1\geq 4 > \frac{7}{2}$, we get
\begin{equation*}
\Gamma(p-1, \Delta_{p-1}(t-1) + \Delta_{p-2}(t-1) + 1) \geq (m+t)2^{m-1-2(p-2)}.
\end{equation*}
Therefore
\begin{equation*}
\Gamma(p, \Delta_p m + \Delta_{p-1} t + r + 1) \geq (m+t)2^{m-2(p-2)}.
\end{equation*}
\end{proof}

\medskip
\textit{Acknowledgements.} I would like to thank Professor Thierry Bousch for answering my questions, for his comments and for spotting a mistake in an earlier version of this paper.

\bibliographystyle{abbrv}
\bibliography{bibl}

\begin{thebibliography}{10}

\bibitem{Hinz14}
S.~Aumann, K.~G\"otz, A.~Hinz, and C.~Petr.
\newblock The number of moves of the largest disc in shortest paths on {H}anoi
  graphs.
\newblock {\em Electron. J. Combin.}, 21:\#P4.38, 2014.

\bibitem{Bousch14}
T.~Bousch.
\newblock La quatri{\`e}me tour de {H}ano{\"i}.
\newblock {\em Bull. Belg. Math. Soc. Simon Stevin}, 21:895--912, 2014.

\bibitem{Chen2004}
X.~Chen and J.~Shen.
\newblock On the {F}rame-{S}tewart conjecture about the {T}owers of {H}anoi.
\newblock {\em SIAM J. Comput.}, 33(3):584--589, 2004.

\bibitem{Frame41}
J.~S. Frame.
\newblock Solution to {A}dvanced {P}roblem 3918.
\newblock {\em Amer. Math. Monthly}, 48(3):216--217, 1941.

\bibitem{Klavzar02}
S.~Klav\u{z}ar, U.~Milutinovi\'{c}, and C.~Petr.
\newblock On the {F}rame-{S}tewart algorithm for the multi-peg {T}ower of
  {H}anoi problem.
\newblock {\em Discrete Appl. Math.}, 120(1--3):141--157, 2002.

\bibitem{Lucas1893}
{\'E}.~Lucas.
\newblock {\em R{\'e}cr{\'e}ations Math{\'e}matiques, vol. III}.
\newblock Gauthier-Villars, Paris, 1893.
\newblock Reprinted several times by Albert Blanchard, Paris.

\bibitem{Lunnon86}
W.~F. Lunnon.
\newblock The {R}eve's {P}uzzle.
\newblock {\em The Computer Journal}, 29:478, 1986.

\bibitem{Rand09}
M.~Rand.
\newblock On the {F}rame-{S}tewart {A}lgorithm for the {T}ower of {H}anoi.
\newblock Technical report, Boston College, 2009.
\newblock Available online at
  \url{https://www2.bc.edu/~grigsbyj/Rand_Final.pdf}.

\bibitem{Stewart39}
B.~M. Stewart.
\newblock {A}dvanced {P}roblem 3918.
\newblock {\em Amer. Math. Monthly}, 46(6):363, 1939.

\bibitem{Stewart41}
B.~M. Stewart.
\newblock Solution to {A}dvanced {P}roblem 3918.
\newblock {\em Amer. Math. Monthly}, 48(3):217--219, 1941.

\bibitem{Stockmeyer94}
P.~K. Stockmeyer.
\newblock Variations on the {F}our-{P}ost {T}ower of {H}anoi {P}uzzle.
\newblock {\em Congr. Numer.}, 102:3--12, 1994.

\bibitem{Szegedy99}
M.~Szegedy.
\newblock In how many steps the k peg version of the {T}owers of {H}anoi game
  can be solved?
\newblock In {\em STACS 99 (Trier)}, volume 1563 of {\em Lecture Notes in
  Comput. Sci.}, pages 356--361. Springer-Verlag, Berlin, 1999.

\end{thebibliography}
\end{document}